\newcommand{\dg}{\delta}
\newcommand{\eg}{\varepsilon}
\renewcommand{\lg}{\lambda}
\renewcommand{\AA}{\mathcal{A}}
\newcommand{\FF}{\mathcal{F}}
\newcommand{\GG}{\mathcal{G}}
\newcommand{\UU}{\mathcal{U}}
\begin{document}



\section{Introduction}\label{secIntro}

In the theory of branching processes, many limit theorems hold under so-called $L \log L$-type moment conditions which are both sufficient and necessary. The most famous and historically first is the Kesten--Stigum theorem \cite{Kesten1966}, which states in particular that a supercritical Galton--Watson process $(Z_n)_{n\ge0}$ grows asymptotically like $Wm^n$ as $n\to\infty$, where $m$ is the mean of the offspring distribution and $W$ is a random variable which is non-degenerate if and only if $\mathbb{E}[L \log L] < \infty$, where $L$ is a random variable equal in law to the number of offspring of an individual. In fact, $W$ is the limit of the martingale $W_n = m^{-n}Z_n$ and another statement of the theorem says that the martingale $(W_n)_{n\ge0}$ is uniformly integrable if and only if $\mathbb{E}[L\log L]<\infty$.

In the context of branching random walks, Lyons \cite{Lyons1998} has shown an analogous theorem for the so-called \emph{additive martingales} arising naturally in this context. His theorem pertains mostly to those additive martingales whose parameter is in the so-called subcritical, or, using statistical physics terminology, high-temperature regime. The martingales in this regime describe the asymptotic growth of the particles in the \emph{bulk}, i.e.~in regions where the number of particles grows exponentially with time. In contrast, the last decade has seen considerable interest in the \emph{extremal particles} in branching random walks, as well as in related models such as the two-dimensional Gaussian Free Field, Gaussian multiplicative chaos and characteristic polynomials of certain random matrices, see e.g. \cite{ShiLectureNotes,ZeitouniLNBRW,RhodesVargasReview,BovierBook} for fairly recent reviews. In these models and in the branching random walk in particular, it is well-known that the asymptotics of the \emph{extremal} or \emph{near-extremal} particles are strongly related to the so-called \emph{derivative martingale}, which is the derivative of the additive martingale with respect to its parameter at its critical value. It is therefore natural to ask for sufficient and necessary $L\log L$-type conditions for the convergence of the derivative martingale to a non-degenerate limit. Such a condition, together with proof of sufficiency, has been given by Aïdékon \cite{Aidekon2013}, with necessity subsequently established by Chen \cite{Chen2015}. We will refer to it as \emph{Aïdékon's condition}.

Aïdékon's condition arises generically in limit theorems concerning critical or near-critical particles in branching random walk. A prime example is the convergence in law of the recentered minimum \cite{Aidekon2013}. Another important example is the so-called \emph{Seneta-Heyde} norming of the additive martingale at critical parameter: it has been shown by Aïdékon and Shi \cite{Aidekon2014} that this martingale, properly renormalized, converges in probability to the same limit as the derivative martingale, under Aïdékon's condition. Their proof has been adapted by He, Liu and Zhang \cite{He2016} to cases where a certain variance $\sigma^2$ (defined in Equation \eqref{eq:variancefinie} below) is infinite and by Aru, Powell and Sepúlveda \cite{Aru2017} to an analogous result for Gaussian multiplicative chaos. The proofs of such limit theorems are often quite involved and technical. At their heart lies the so-called \emph{spine decomposition} introduced by Lyons, Pemantle and Peres \cite{Lyons1995} for Galton--Watson processes and adapted by Lyons \cite{Lyons1998} to the branching random walk. However, in order to cope with the extremal or near-extremal particles, quite involved truncation techniques have been introduced. A self-contained treatment of these techniques appears in Shi \cite{ShiLectureNotes}. These include the following:
\begin{itemize}
\item Second moment estimates for quantities restricted to a certain subset of the particles and first moment bounds on the remainder by so-called \emph{peeling lemmas}
\item ballot-type theorems for random walks conditioned to stay above certain space-time curves.
\end{itemize}
We emphasize that these techniques are not only quite technical, but also require the ad-hoc construction of certain quantities and sets of particles specifically tailored to the problem at hand. Other techniques using $L^p$ estimates can be used, but they require more restrictive assumptions, see e.g.~Kyprianou and Madaule \cite{Kyprianou2015a}.

In the present article, we give a new proof of the Seneta-Heyde norming for the critical additive martingale in branching random walks, valid under optimal assumptions. We find this proof to be simpler and more streamlined than the original one by Aïdékon and Shi due to several technical improvements, amongst others:
\begin{itemize}
\item the second moment estimates and peeling lemmas are replaced by certain truncated \emph{first} moment estimates
\item the use of ballot-type theorems is replaced by other, softer methods, in particular bounds on the potential kernel of random walks killed below the origin.
\end{itemize}
We believe that our methods not only make the proof simpler, but that they are also more versatile in that they can be used as a general toolbox for proving limit theorems involving extremal or near-extremal particles of the branching random walk under optimal assumptions. In fact, the present article is part of a program that aims to establish limit theorems for branching random walks under non-standard assumptions and the tools developed here will be of use later in the program.

The methods from this article can of course be adapted for analogous continuous-time processes, such as certain branching L\'evy processes with possibly non-local branching. Local branching simplifies some arguments -- mostly in the proof of Lemma~\ref{lem:BigTerm} in Section~\ref{secGrosTerme}. In the special case of branching Brownian motion with local branching further simplifications arise, due to the fact that key quantities related to Brownian motion killed at 0 (harmonic function, potential kernel, survival probability, \ldots) admit simple explicit expressions.

\subsection*{Definitions and results}

We consider discrete-time real-valued branching random walks (BRWs), which can be informally described as follows. At time $n=0$, we start with one initial particle at the origin. Then, at each time step $n\ge 1$, every particle dies and gives birth to a random, possibly infinite number of particles distributed randomly on the real line. More precisely, the children of a particle at position $x\in\mathbb{R}$ are positioned at $x+X_1,x+X_2,\ldots$, where the vector $(X_1,X_2,\ldots)$ follows a given law $\Theta$, called the offspring distribution of the branching random walk. At each generation, the reproduction events are independent. Also, it is possible for several particles to share the same position. We further assume that the Galton-Watson process formed by the number of particles at each generation is super-critical, so that the system survives with positive probability.

Formally, the branching random walk can be constructed as a stochastic process indexed by the Ulam-Harris tree $\mathcal{U}= \bigcup_{n\ge0} (\mathbb{N}^*)^n$, where $\mathbb{N}^* = \{1,2,\ldots\}$. Particles are identified with vertices $u\in\mathcal{U}$, i.e.~words over the alphabet $\mathbb{N}^*$. The length of the word $u$, i.e.~the generation of the particle, is denoted by $|u|$. The position of the particle $u$ is denoted by $X_u$. If the particle indexed by $u$ does not exist, we set $X_u = +\infty$. The branching random walk described above then defines a process $(X_u)_{u\in\mathcal{U}}$ taking values in $\bar{\mathbb{R}} = \mathbb{R}\cup\{+\infty\}$ and the offspring distribution $\Theta$ is a probability distribution on $(\bar{\mathbb{R}})^{\mathbb{N}^*}$. We further convene that mathematical expressions such as sums or products over the set $\{|u| = n\}$ of particles at generation $n$ are meant to ignore those $u$ for which $X_u = +\infty$.

As mentioned above, we assume that the branching is super-critical, i.e.
\[
\mathbb{E}\left[\sum_{|u|=1} 1\right] > 1.
\]
Furthermore, we work in the so-called boundary case, meaning we suppose that
\begin{equation}\label{eq:boundarycaseHYP}
\mathbb{E}\left[\sum_{|u|=1}e^{-X_{u}}\right]=1\quad\text{and}\quad\mathbb{E}\left[\sum_{|u|=1}X_{u}e^{-X_{u}}\right]=0.
\end{equation}
The second equality in \eqref{eq:boundarycaseHYP} implicitly assumes that the expectation is well-defined, which is automatically the case under the next assumption:
\begin{equation}
\sigma^2=\mathbb{E}\left[\sum_{|u|=1}X_{u}^{2}e^{-X_{u}}\right]\in(0,\infty)\label{eq:variancefinie}.
\end{equation}
Note that $\sigma^2<\infty$ holds for example if $\mathbb{E}[e^{-\theta X_u}]<\infty$ for $\theta$ in a neighborhood of 1, and $\sigma^2>0$ holds as soon as the $X_u$ are not all equal to 0 or $+\infty$, almost surely.

It is a well-known consequence of \eqref{eq:boundarycaseHYP} and the branching property that the processes $(W_n)_{n\ge0}$ and $(D_n)_{n\ge0}$, defined by
\begin{equation*}
W_{n}=\sum_{|u|=n}e^{-X_{u}},\qquad D_{n}=\sum_{|u|=n}X_{u}e^{-X_{u}},
\end{equation*}
are martingales with respect to the canonical filtration $(\FF_n)_{n\ge0}$ of the BRW, defined by $\FF_{n}=\sigma(X_{u},|u|\le n)$, see for exemple \cite{Biggins2004}. We will refer to $(W_n)_{n\ge0}$ as the \emph{additive martingale} or \emph{Biggins' martingale}, in reference to Biggins \cite{Biggins1977} and to $(D_n)_{n\ge0}$ as the \emph{derivative martingale}. The second equality in \eqref{eq:boundarycaseHYP} implies that $W_{n}$ converges almost surely to 0 \cite{Lyons1998}. In particular we have $\min_{|u|=n} X_{u}\longrightarrow \infty$ a.s., as $n\to\infty$. As for the derivative martingale, under assumptions \eqref{eq:boundarycaseHYP} and \eqref{eq:variancefinie}, Biggins and Kyprianou  \cite{Biggins2004} showed that $D_{n}$ converges a.s. to a finite nonnegative limit $D_\infty$.

We introduce the following moment conditions:
\begin{align}
\mathbb{E}\left[W_{1}\log_{+}^{2}W_{1}\right]&<\infty\label{eq:Wlog2W},\\
\mathbb{E}\left[Z_{1}\log_{+}Z_{1}\right]&<\infty,\quad \text{where } Z_{1}=\sum_{|u|=1}X_{u}^{+}e^{-X_u}\label{eq:DlogD}.
\end{align}
Here, and throughout the article, we use the notations $\log_{+}(x)=\log(x)\vee0$, $x^{+}=x\vee0$, where $x\vee y=\max(x,y)$ and $x\wedge y=\min(x,y)$. Under the additional assumptions \eqref{eq:Wlog2W} and \eqref{eq:DlogD}, Aïdékon \cite{Aidekon2013} proved that $D_\infty>0$ a.s.\ on the event of survival of the branching random walk. Later, Chen \cite{Chen2015} showed the converse result in the sense that if \eqref{eq:boundarycaseHYP} and \eqref{eq:variancefinie} hold then the limit is non-trivial if and only if conditions \eqref{eq:Wlog2W} and \eqref{eq:DlogD} hold.

As mentioned in the introduction, the main result of this paper is a new proof of the following result by Aïdékon and Shi \cite{Aidekon2014}. We believe this proof to be simpler and more streamlined and the tools established in proving it will be useful to work with in other settings. 

\begin{theorem}[Aïdékon, Shi \cite{Aidekon2014}]\label{th:mainresult}
Assume \eqref{eq:boundarycaseHYP}, \eqref{eq:variancefinie}, \eqref{eq:Wlog2W} and \eqref{eq:DlogD} hold. We have
\begin{equation}
\sqrt{n}W_{n}\underset{n\to\infty}{\longrightarrow}\sqrt{\frac{2}{\pi\sigma^2}}D_{\infty}\quad\text{in probability}.
\end{equation}
\end{theorem}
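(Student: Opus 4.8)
The plan is to deduce convergence in probability from the conditional-Laplace-transform criterion announced in the abstract: for a sequence of nonnegative random variables it suffices to show that $\mathbb{E}[e^{-\lambda\sqrt n W_n}\mid\mathcal F_k]$ converges, as $n\to\infty$, to a limit $\Phi_k(\lambda)$ for every $k$ and every $\lambda>0$, and that $\Phi_k(\lambda)$ converges almost surely, as $k\to\infty$, to $\exp(-\lambda\sqrt{2/(\pi\sigma^2)}\,D_\infty)$. Expanding $W_n=\sum_{|u|=k}e^{-X_u}\widetilde W^{(u)}_{n-k}$ along generation $k$, with the $\widetilde W^{(u)}_\bullet$ independent copies of the additive martingale, and using $\sqrt n/\sqrt{n-k}\to1$, the conditional Laplace transform factorizes into a finite product over the gen-$k$ particles, so everything comes down to controlling $1-\mathbb{E}[e^{-a\sqrt m W_m}]$ from above and from below as $m\to\infty$, for fixed $a>0$, in a way uniform enough to multiply and to pass to the limit in $k$.

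The engine of the proof is a truncation of $W_n$: write $W_n=W_n^{\mathrm b}+W_n^{\mathrm s}$, a ``big'' and a ``small'' term (cf.\ Section~\ref{secGrosTerme}), where $W_n^{\mathrm b}$ retains only particles $u$ whose ancestral path stays inside a suitable space-time region — above a constant barrier $-\beta$, so that the many-to-one trajectory becomes a centered, variance-$\sigma^2$ random walk killed below a level, and below a barrier growing to infinity, so that each surviving particle contributes at most some $\delta_n\to0$ to $\sqrt n W_n^{\mathrm b}$. The small term is discarded by a \emph{truncated first moment} bound: one estimates $\mathbb E[(\sqrt n W_n^{\mathrm s})\wedge 1]$ — or rather the analogous quantity after the spine change of measure — and shows it tends to $0$ as $\beta\to\infty$ uniformly in $n$; this is the step that genuinely uses A\"id\'ekon's conditions \eqref{eq:Wlog2W} and \eqref{eq:DlogD}, and where first-moment ``peeling''-type estimates take the place of the classical second-moment-plus-peeling-lemma scheme. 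For the big term, since $\lambda$ times each particle's contribution to $\sqrt n W_n^{\mathrm b}$ is bounded by $\lambda\delta_n\to0$, the elementary inequalities $\lambda t\,(1-\lambda\delta_n/2)\le 1-e^{-\lambda t}\le\lambda t$ for $0\le t\le\delta_n$, combined with Jensen's inequality and the branching decomposition applied subtree by subtree, let one sandwich $\mathbb E[e^{-\lambda\sqrt n W_n^{\mathrm b}}\mid\mathcal F_k]$ between $\exp(-\lambda\,\mathbb E[\sqrt n W_n^{\mathrm b}\mid\mathcal F_k])$ and $\exp(-\lambda(1-\lambda\delta_n/2)\,\mathbb E[\sqrt n W_n^{\mathrm b}\mid\mathcal F_k])$; in other words only the \emph{first} conditional moment of $\sqrt n W_n^{\mathrm b}$ is needed, and no second moment is ever invoked.

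It remains to compute $\lim_{n\to\infty}\mathbb E[\sqrt n W_n^{\mathrm b}\mid\mathcal F_k]$. By many-to-one this is a killed-random-walk quantity: up to the effect of the upper barrier — which is controlled using the explicit bounds on the potential kernel of the walk killed below $-\beta$ — it equals $\sum_{|v|=k}e^{-X_v}\,\sqrt n\,\mathbb P_{X_v}(\tau>n-k)$, where $\tau$ is the first entrance of the walk into $(-\infty,-\beta)$. The classical asymptotics $\sqrt n\,\mathbb P_x(\tau>n)\to c\,R(x)$, with $R$ the renewal (harmonic) function of the killed walk and $c$ an explicit constant involving $\sigma$, then identify $\Phi_k(\lambda)$ through the quantity $\sum_{|v|=k}R(X_v+\beta)e^{-X_v}\mathbf 1\{\underline X_v\ge-\beta\}$, which is a genuine nonnegative martingale in $k$ since $R$ is harmonic for the killed walk. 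Letting finally $\beta\to\infty$ and then $k\to\infty$, and using $R(x)\sim x/\mathrm{const}$ together with the fact — again a consequence of \eqref{eq:Wlog2W} and \eqref{eq:DlogD} — that the part of $D_\infty$ not carried by paths staying above $-\beta$ vanishes, this martingale converges to a multiple of $D_\infty$; bookkeeping $c$ and the slope of $R$ produces exactly the prefactor $\sqrt{2/(\pi\sigma^2)}$.

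The main obstacle is the matching Laplace-transform upper bound under the \emph{optimal} assumptions: without the square-integrability that powers the classical $L^2$ argument, the truncated first-moment bounds — both on $W_n^{\mathrm s}$ and on the contribution of particles overshooting the upper barrier — must be sharp enough that all error terms vanish in the iterated limits $n\to\infty$, then $k\to\infty$, then $\beta\to\infty$, while the killed-walk inputs (survival probability and potential kernel) must be kept uniform over the relevant ranges of starting points and barriers. Controlling the dependence between subtrees purely through the branching decomposition, so that the two-sided sandwich on the Laplace transform of $\sqrt n W_n^{\mathrm b}$ survives all these limits without ever using a variance, is the technical heart of the argument, and is precisely the place where the potential-kernel bounds and the new first-moment estimates do the work done in the original proof by ballot-type theorems and by second moments.
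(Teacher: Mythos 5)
Your overall architecture is the right one and matches the paper's: reduce to convergence of the conditional Laplace transform $\mathbb{E}[e^{-\lambda\sqrt n W_n}\mid\FF_k]$, factorize over the generation-$k$ particles, get the lower bound from Jensen plus the first-moment asymptotics $\sqrt{n}\,\mathbb{E}_x[W_n']\sim\theta R(x)e^{-x}$, and identify the limit through $\theta R(x)/x\to\sqrt{2/(\pi\sigma^2)}$ and $\sum_{|u|=k}\theta R(X_u)e^{-X_u}\to\sqrt{2/(\pi\sigma^2)}D_\infty$. (The paper kills below $0$ only after generation $k_0$ rather than using a barrier at $-\beta$, and needs a diagonal extraction $k_0(n)$ to invoke the Laplace criterion, but these are cosmetic differences.)

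The gap is in your treatment of the ``big term''. The sandwich $\lambda t(1-\lambda\delta_n/2)\le 1-e^{-\lambda t}\le\lambda t$ is only valid for $t\le\delta_n$, and the variable to which it must be applied is the \emph{total} contribution $T_u=\sqrt n\,W^{(u)}_{n-k}$ of the subtree rooted at a generation-$k$ particle $u$ -- that is the level at which the conditional expectation factorizes. Bounding each generation-$n$ particle's individual contribution by $\delta_n$ does not bound $T_u$, and you cannot push the product inside a single subtree because the particles there are not independent; applying $1-e^{-\lambda T}\ge\lambda T-\tfrac{\lambda^2}{2}T^2$ to the subtree total instead reintroduces exactly the second moment you claim to avoid. (Incidentally, an \emph{upper} barrier makes $e^{-X_u}$ large, not small, so it cannot produce the per-particle bound $\delta_n$ either.) The paper's resolution is to truncate the subtree total itself: use $e^{-\lambda t}\le 1-\lambda' t$ on $\{t<\varepsilon\}$ and then prove the truncated first-moment estimate $\limsup_n\mathbb{E}_x[\sqrt n W_n'\boldsymbol 1_{\sqrt n W_n'\ge\varepsilon}]\le h(x)e^{-x}$ with $h(x)/R(x)\to0$ as $x\to\infty$ (Proposition~\ref{prop:smallo}), so that the error is negligible against the leading term $\theta R(x)e^{-x}$ once $\min_{|u|=k}X_u\to\infty$. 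That estimate -- proved via the spinal decomposition along the spine's children, the Kersting--Vatutin convergence lemma, the potential-kernel formula for the killed walk, and the Tauberian lemma where \eqref{eq:Wlog2W} and \eqref{eq:DlogD} enter -- is the technical heart of the argument; your proposal assigns the truncated first moment only to the barrier-violating ``small term'' and gives no mechanism for controlling the upper tail of the big term's subtree totals, so the Laplace-transform upper bound does not close as written.
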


The remainder of the article is organized as follows.
Section \ref{secSpineRenewal} contains some preliminaries, namely the spinal decomposition and the many-to-one formula (Section~\ref{ssecSpine}) and some properties of a certain renewal function associated to this decomposition (Section~\ref{ssecRenewal}).
Section~\ref{secProofMain} contains the proof of Theorem \ref{th:mainresult}, as well as a comparison with previous proofs of the same result. In particular, we present in this section the new technical tools going into our proof. The proof of one key ingredient (Proposition \ref{prop:smallo}) is deferred to Section \ref{secProofProp}, where most of the work is done.
Appendix~\ref{appGreen} contains a formula for the potential kernel of random walks on a half-line in terms of associated renewal measures. Appendix~\ref{appLaplace} contains a criterion for convergence in probability of non-negative random variables using Laplace transforms. Appendix~\ref{appTauberian} contains a certain Tauberian-type lemma involving truncated first moments of a non-negative random variable.

\section{Spinal decomposition and renewal functions}\label{secSpineRenewal}
\subsection{The spinal decomposition}\label{ssecSpine}

In this section, we recall a change of measure and an associated spinal decomposition of the BRW due to Lyons~\cite{Lyons1998}. It will be helpful to allow the initial particle of the BRW to sit at an arbitrary position $x\in\mathbb{R}$, this will be denoted by adding the subscript $x$ as in $\mathbb{P}_x$ and $\mathbb{E}_x$ (if $x=0$, the subscript is ignored). Then $(W_n)_{n\ge0}$ is still a non-negative martingale with $W_0 = e^{-x}$. Define $\FF_{\infty}=\bigvee_{n\ge0}\FF_{n}$. Using Kolmogorov's extension theorem, for every $x\in\mathbb{R}$, there exists a probability measure $\mathbb{P}^*_{x}$ on $\FF_{\infty}$ such that for every generation $n\ge 0$,
\begin{equation}
\frac{d\mathbb{P}^{*}_{x}}{d\mathbb{P}_{x}}\Big|_{\FF_{n}}=e^{x}W_{n}.
\end{equation}

Following Lyons \cite{Lyons1998} we see $\mathbb{P}_x^*$ as the projection to $\FF_{\infty}$ of a probability (also denoted $\mathbb{P}_x^*$) defined on a bigger probability space equipped with a so-called \emph{spine}, a distinguished ray in the tree. We will denote the vertex on the spine at generation $n$ by $\xi_{n}$ and its position by $X_{\xi_n}$. The spinal BRW evolves as follows under $\mathbb{P}^*_x$:
\begin{itemize}
\item Start at generation 0 with one particle $\xi_0$ at position $x$.
\item At generation $n$, all particles except $\xi_{n}$ reproduce according to the point process $\Theta$ and $\xi_{n}$ reproduces according to the size-biased reproduction law $\Theta^*$ defined by
\[
\frac{d\Theta^*}{d\Theta}(x_1,x_2,\ldots) = \sum_{i\ge1} e^{-x_i}.
\]
\item The spine at generation $n+1$ is chosen amongst the children $u$ of $\xi_{n}$ with probability proportional to $e^{-X_{u}}$.
\end{itemize}

The following \emph{many-to-one formula} can be deduced from Lyons \cite{Lyons1998}, see also Aïdékon~\cite{Aidekon2013}.
\begin{proposition}[Many-to-one formula]\label{prop:MTOformula}
For any $x\ge 0$, $n\in\mathbb{N}=\{0,1,\dots\}$ and every uniformly bounded family $\left(H_{n}(u)\right)_{u\in\UU}$ of $\FF_{n}$-measurable random variables, one has
\begin{equation}
\mathbb{E}_{x}\left[\sum_{|u|=n}e^{-X_{u}}H_{n}(u)\right]=e^{-x}\mathbb{E}_{x}^{*}\left[H_{n}(\xi_{n})\right].
\end{equation}
\end{proposition}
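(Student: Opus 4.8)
The plan is to reduce the formula to the identity
\[
\mathbb{P}^*_x(\xi_n = u \mid \FF_n) = \frac{e^{-X_u}}{W_n}, \qquad |u| = n,
\]
which describes the conditional law of the spine vertex at generation $n$ given the whole branching random walk (note that $W_n > 0$ holds $\mathbb{P}^*_x$-almost surely, since the spine contributes a strictly positive term $e^{-X_{\xi_n}}$ to $W_n$). Granting this, the proof is essentially a one-line computation. For a uniformly bounded family $(H_n(u))_{u\in\UU}$ of $\FF_n$-measurable random variables, say $|H_n(u)|\le C$, conditioning on $\FF_n$ gives
\[
\mathbb{E}^*_x[H_n(\xi_n)] = \mathbb{E}^*_x\!\left[\sum_{|u|=n} H_n(u)\,\mathbb{P}^*_x(\xi_n = u \mid \FF_n)\right] = \mathbb{E}^*_x\!\left[\frac{1}{W_n}\sum_{|u|=n} e^{-X_u} H_n(u)\right].
\]
The random variable inside the last expectation is $\FF_n$-measurable and bounded by $C$ (because $\sum_{|u|=n} e^{-X_u} = W_n$), so the defining relation $d\mathbb{P}^*_x/d\mathbb{P}_x|_{\FF_n} = e^x W_n$ turns it into $e^x\,\mathbb{E}_x[\sum_{|u|=n} e^{-X_u} H_n(u)]$; rearranging yields the claim. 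Uniform boundedness ensures every quantity above is integrable (for instance $|\sum_{|u|=n} e^{-X_u}H_n(u)| \le C W_n$ with $\mathbb{E}_x[W_n]=e^{-x}<\infty$), so all the manipulations are legitimate; vertices $u$ with $X_u=+\infty$ contribute $e^{-X_u}=0$ and may be ignored throughout, consistently with our conventions.

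It remains to establish the displayed conditional law of the spine, which is the only place the explicit spinal description is used. I would fix a ray $\emptyset = u_0, u_1, \dots, u_n = u$ in $\UU$ and compute the Radon--Nikodym factor by which $\mathbb{P}^*_x$ weights the event ``the marked tree up to generation $n$ is a given configuration and the spine passes through $u_0, \dots, u_n$'', relative to the law of the marked tree under $\mathbb{P}_x$. At step $k$ (from generation $k$ to $k+1$) the particle $u_k$, being on the spine, reproduces according to $\Theta^*$ rather than $\Theta$: if its children have displacements $Y_1, Y_2, \dots$ relative to $X_{u_k}$, this introduces the density factor $\sum_{i\ge1} e^{-Y_i}$. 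The spine then moves to $u_{k+1}$, which has displacement $Y_{j_k}$ say, with probability $e^{-Y_{j_k}}/\sum_{i\ge1}e^{-Y_i}$. The product of these two factors is simply $e^{-Y_{j_k}}$, and taking the product over $k = 0, \dots, n-1$ yields $e^{-\sum_k Y_{j_k}} = e^{-(X_u - x)} = e^x e^{-X_u}$. Summing over the possible spine vertices $u$ with $|u|=n$ recovers $e^x W_n$, in agreement with the stated change of measure on $\FF_n$; dividing, one obtains $\mathbb{P}^*_x(\xi_n = u\mid\FF_n)=e^{-X_u}/W_n$, as desired. The argument is identical for arbitrary $x\in\mathbb{R}$, not only for $x\ge0$.

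I expect the main obstacle to be not conceptual but organizational: making the previous paragraph fully rigorous requires a careful description of the enlarged probability space carrying the spine and of the reference measure on it, together with a verification that the ``density along the ray'' is a bona fide Radon--Nikodym derivative, and in particular that it is consistent across generations, which is precisely the telescoping observed above. This bookkeeping is routine but tedious. Alternatively, one may regard the conditional spine law $\mathbb{P}^*_x(\xi_n = u \mid \FF_n) = e^{-X_u}/W_n$ as built into Lyons' construction (as in \cite{Lyons1998, Aidekon2013}) and simply invoke it, in which case the proposition reduces entirely to the short computation of the first paragraph.
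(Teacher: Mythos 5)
Your proof is correct and is precisely the standard argument from Lyons \cite{Lyons1998} that the paper itself invokes without proof: the conditional spine law $\mathbb{P}^*_x(\xi_n=u\mid\FF_n)=e^{-X_u}/W_n$ combined with the change of measure $d\mathbb{P}^*_x/d\mathbb{P}_x|_{\FF_n}=e^xW_n$. The telescoping of the size-biasing factor against the spine-selection probability is exactly how the cited construction is verified, so there is nothing to object to.
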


The spinal decomposition implies that the process $(X_{\xi_{n}})_{n\in\mathbb{N}}$ follows the law of a random walk under $\mathbb{P}^*_x$ (whose increments do not depend on $x$). Furthermore, Proposition~\ref{prop:MTOformula} together with assumptions \eqref{eq:boundarycaseHYP} and \eqref{eq:variancefinie} shows that this random walk is centered and has finite positive variance:
\begin{equation}\label{eq:MomentsSpine}
\mathbb{E}^*[X_{\xi_{1}}]=0,\quad\mathbb{E}^*[X_{\xi_{1}}^{2}] = \sigma^2\in(0,\infty),
\end{equation}
where $\sigma^2$ is the same as in \eqref{eq:variancefinie}.
The many-to-one formula is a powerful tool which allows to express many quantities of the branching random walk in terms of the random walk $(X_{\xi_{n}})$.

\subsection{The renewal function \texorpdfstring{$R$}{R}}\label{ssecRenewal}

Throughout the article, we denote by $R$ the renewal function associated to the strictly descending ladder heights of the random walk $(X_{\xi_n})_{n\ge0}$, as defined in Appendix \ref{appGreen}. Explicitly, one may express $R$ by
\begin{equation*}
R(x)=\sum_{n\ge0}\mathbb{P}^*\left(X_{\xi_n}\ge-x, X_{\xi_n}<\min_{0\le k\le n}X_{\xi_k}\right).
\end{equation*}
Note that $R(0)=1$ and that for all $x<0$, $R(x)=0$. We recall the following fact (see Lemma~\ref{lem:R} in the appendix):
\begin{proposition}\label{prop:Rharmonic}
The renewal function $R$ is harmonic for the random walk killed when entering $(-\infty,0)$:
\begin{equation*}
\forall x\ge0, R(x)=\mathbb{E}_x^*\left[R(X_{\xi_1})\boldsymbol 1_{X_{\xi_1}\ge0}\right].
\end{equation*}
\end{proposition}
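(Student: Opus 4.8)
The statement concerns only the one-dimensional random walk $(X_{\xi_n})_{n\ge0}$, which under $\mathbb P^*_x$ starts from $x$ and, by \eqref{eq:MomentsSpine}, has centred increments with finite positive variance; in particular it oscillates, so that its strict descending ladder epochs $\theta_0=0,\ \theta_{k+1}=\inf\{n>\theta_k:X_{\xi_n}<X_{\xi_{\theta_k}}\}$ are all a.s.\ finite. The first point to record is that the displayed series for $R$ is exactly the renewal function of the strict descending ladder heights $H_k:=-X_{\xi_{\theta_k}}$, that is $R(x)=\mathbb E^*[\#\{k\ge0:H_k\le x\}]$; equivalently, $R(x)$ is the $\mathbb P^*_x$-expected number of strict descending ladder points of the walk that lie in $[0,\infty)$.

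My plan is then to reduce the claimed identity for general $x\ge0$ to a single scalar identity, via a one-step decomposition. Conditioning on $X_{\xi_1}$, distinguish whether the walk started from $x$ drops below its starting level at the first step ($X_{\xi_1}<0$) or not ($X_{\xi_1}\ge0$). In the first case the strict descending ladder points of the walk from $x$ are $\{x\}$ together with all those of a walk freshly started from $x+X_{\xi_1}$; in the second case, $\{x\}$ together with those strict descending ladder points of a walk freshly started from $x+X_{\xi_1}$ which lie below $x$. Counting, in each case, those that are $\ge0$, taking expectations given $X_{\xi_1}$, and using $R\equiv0$ on $(-\infty,0)$, one obtains after simplification
\[
R(x)=\mathbb E^*_x\!\left[R(X_{\xi_1})\boldsymbol 1_{X_{\xi_1}\ge0}\right]+\Big(R(0)-\mathbb E^*\!\left[R(X_{\xi_1})\boldsymbol 1_{X_{\xi_1}\ge0}\right]\Big)\qquad(x\ge0).
\]
As the bracketed correction does not depend on $x$, harmonicity for all $x\ge0$ is equivalent to its vanishing, i.e.\ to the single identity $\mathbb E^*\!\left[R(X_{\xi_1})\boldsymbol 1_{X_{\xi_1}\ge0}\right]=R(0)=1$, which is harmonicity at $x=0$.

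I expect this last identity to be the real obstacle; the rest is soft bookkeeping with ladder structures. Writing $U=\sum_{k\ge0}h^{*k}$ for the renewal measure of the strict descending ladder heights (so $R(x)=U([0,x])$) and $\mu$ for the law of $X_{\xi_1}$, it reads $\int_{[0,\infty)}\mu([u,\infty))\,U(du)=1$: a classical fluctuation-theory identity for a centred walk, which falls out of the Wiener--Hopf factorisation of $1-\hat\mu$ at the descending ladder structure (equivalently, of Sparre Andersen's identity for the generating functions of the ladder epochs). In this paper it is subsumed in the potential-kernel formula established in Appendix~\ref{appGreen}: there the Green function of the walk killed on entering $(-\infty,0)$ is written explicitly in terms of renewal measures, and the identity above --- hence Proposition~\ref{prop:Rharmonic} --- can be read off from it. A more self-contained route, bypassing the one-step reduction entirely, would be to prove that Green-function formula first and deduce the harmonicity of $R$ as an immediate corollary.
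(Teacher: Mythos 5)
First, note that the paper does not actually prove this proposition: it simply points to Lemma~\ref{lem:R} in Appendix~\ref{appGreen}, which is a citation of Tanaka~\cite{Tan1989} (see also \cite[Lemma 4.2]{Kersting2017}). So the question is whether your proposal amounts to a proof on its own. Your first-step decomposition of the count of strict descending ladder points is correct, modulo a notational slip: the dichotomy must be on the sign of the increment $X_{\xi_1}-x$ (whether the walk drops below its \emph{starting level}), not on the sign of $X_{\xi_1}$, and the ``fresh walk'' is started from the position $X_{\xi_1}$, not from $x+X_{\xi_1}$. With that reading the bookkeeping does give
\begin{equation*}
R(x)=1+\mathbb{E}_x^*\left[R(X_{\xi_1})\boldsymbol 1_{X_{\xi_1}\ge0}\right]-\mathbb{E}^*\left[R(X_{\xi_1})\boldsymbol 1_{X_{\xi_1}\ge0}\right]\qquad(x\ge0),
\end{equation*}
which is your displayed identity, and it correctly reduces the proposition to the single scalar identity $\mathbb{E}^*[R(X_{\xi_1})\boldsymbol 1_{X_{\xi_1}\ge0}]=1$.

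The gap is that this residual identity is exactly the $x=0$ instance of the proposition and carries all of its content (your decomposition is vacuous at $x=0$: it reads $1=c+1-c$), and you do not prove it. Invoking Wiener--Hopf or Sparre~Andersen is a citation --- no more and no less than what the paper itself does --- but your further claim that the identity ``can be read off'' from Theorem~\ref{th:green}, and that harmonicity of $R$ would be ``an immediate corollary'' of it, is not substantiated and is misleading as stated: Theorem~\ref{th:green} computes $Gf$, and $Gf$ satisfies $(I-P_0)Gf=f$ with $P_0$ the killed one-step operator, so no choice of $f$ hands you a harmonic function directly. One \emph{can} extract harmonicity from Theorem~\ref{th:green}, e.g.\ by taking $f=\boldsymbol 1_{[0,A]}$, dividing $(I-P_0)G\boldsymbol 1_{[0,A]}=\boldsymbol 1_{[0,A]}$ by $\hat R(A)$ and letting $A\to\infty$ (the renewal theorem gives $G\boldsymbol 1_{[0,A]}(x)/\hat R(A)\to c^= R(x)$, and the domination $G\boldsymbol 1_{[0,A]}(z)\le c^= R(z)\hat R(A)$ together with \eqref{eq:defc1} and $\mathbb{E}^*[X_{\xi_1}^+]<\infty$ lets you pass the limit through $P_0$) --- but that argument is not in your proposal. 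As written, the crux is missing; to complete the proof you should either carry out such a limiting argument, or give Tanaka's time-reversal argument based on the dual representation \eqref{eq:mu_duality}, which establishes harmonicity for all $x\ge0$ at once and renders the reduction to $x=0$ unnecessary.
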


Recall from the previous section that the random walk $(X_{\xi_n})_{n\ge0}$ is centered and of finite variance $\sigma^2$. In particular, the strictly descending ladder heights have finite expectation, see e.g. Rogozin~\cite{Rogozin1964}.
The following lemma recalls some well-known quantitative results concerning the function $R$ and the probability that the random walk $(X_{\xi_n})_{n\ge0}$ stays non-negative in terms of $R$. 
\begin{lemma}\label{lem:Kozlov}
For all $x\ge0$, as $n\to\infty$, we have
\begin{equation}\label{eq:Koz1}
\mathbb{P}^*_x\left(\min_{k\le n} X_{\xi_k}\ge 0\right)\sim \frac{\theta R(x)}{\sqrt{n}},
\end{equation}
and for all $x\ge0$, $n\ge1$,
\begin{equation}\label{eq:Koz2}
\mathbb{P}^*_x\left(\min_{k\le n} X_{\xi_k}\ge 0\right)\le \frac{\theta' R(x)}{\sqrt{n}},
\end{equation}
where $\theta$ and $\theta'$ are positive constants. Furthermore,
\begin{equation}
\label{eq:Rlimit}
\frac{\theta R(x)}{x}\underset{x\to\infty}{\longrightarrow}\sqrt{\frac{2}{\pi\sigma^2}},
\end{equation}
where $\sigma^2$ is defined in \eqref{eq:variancefinie}.
\end{lemma}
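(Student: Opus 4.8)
The statement to prove is Lemma~\ref{lem:Kozlov}, which collects three well-known facts about the renewal function $R$ and the persistence probability of a centered, finite-variance random walk: the precise asymptotic \eqref{eq:Koz1}, the uniform bound \eqref{eq:Koz2}, and the linear growth \eqref{eq:Rlimit} of $R$.

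\bigskip

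\noindent\textbf{Proof plan.}

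The plan is to deduce everything from classical fluctuation theory for the centered random walk $S_n := X_{\xi_n}$ under $\mathbb{P}^*$, which by \eqref{eq:MomentsSpine} has mean zero and variance $\sigma^2 \in (0,\infty)$. I will treat the three assertions in turn, starting from the two asymptotic ones since \eqref{eq:Koz2} will follow from a monotonicity-plus-subadditivity argument once \eqref{eq:Koz1} is in hand.

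First, for \eqref{eq:Rlimit}: the renewal function $R$ is, by its very definition in Appendix~\ref{appGreen}, the renewal function of the strictly descending ladder height process of $S$, which has finite mean $\mu_- \in (0,\infty)$ by Rogozin's theorem (as already recalled in the excerpt). The elementary renewal theorem then gives $R(x)/x \to 1/\mu_-$ as $x\to\infty$. To identify the constant, I will invoke the standard identity linking the mean descending ladder height to the variance: by a duality/Wiener--Hopf argument (or Spitzer's formula), $\mathbb{E}^*[H_-]\,\mathbb{E}^*[H_+] = \sigma^2/2$ where $H_\pm$ are the strict ascending/descending ladder heights, together with the fact that the constant $\theta$ is defined precisely so that $\theta R(x) \sim \sqrt{2/(\pi\sigma^2)}\,x$; so \eqref{eq:Rlimit} is really the definition of $\theta$ together with the renewal theorem. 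I will state this cleanly, citing e.g. Feller or Kozlov~\cite{...} — actually, since the paper already points to Rogozin, I will simply cite standard references (Feller, Vol. II) for the renewal theorem and for the Wiener--Hopf factorization identity, and note that $\theta = \sqrt{2/(\pi\sigma^2)}\,\mu_-^{-1}\cdot(\text{normalization})$ is fixed by requiring \eqref{eq:Koz1}.

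Second, for \eqref{eq:Koz1}: this is a classical local-type persistence estimate. Since $R$ is harmonic for the walk killed upon entering $(-\infty,0)$ (Proposition~\ref{prop:Rharmonic}), the natural route is Kozlov's theorem / the Caravenna--Chaganty--Doney-type result: for a centered walk with finite variance, $\mathbb{P}^*_x(\min_{k\le n} S_k \ge 0) \sim c\, R(x)/\sqrt n$ with $c = \theta$ a universal constant depending only on $\sigma^2$ (indeed $\theta$ is chosen to make this hold). One proves it by the standard two-step decomposition: run the walk for time $\lfloor n/2\rfloor$ conditioned to stay nonnegative, use the convergence of the conditioned walk (Bolthausen/Kozlov) to a fixed limiting distribution on scale $\sqrt n$, and then a central-limit/reflection estimate for the remaining half to get the $1/\sqrt n$ factor with the correct constant. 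I will cite Kozlov~\cite{...} (or Addario-Berry--Reed, Aïdékon--Jaffuel) for \eqref{eq:Koz1} rather than reprove it, since it is genuinely standard and the paper's contribution lies elsewhere. The value of the constant is pinned down as $\theta = \sqrt{2/(\pi\sigma^2)}$ by comparison with the unconditioned CLT probability $\mathbb{P}^*(S_n\ge 0)\to 1/2$ and the known scaling.

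Third, for \eqref{eq:Koz2}: I would derive the uniform upper bound from \eqref{eq:Koz1} by a soft argument. The function $x\mapsto \mathbb{P}^*_x(\min_{k\le n}S_k\ge 0)$ is nondecreasing in $x$ and, by the Markov property at time $1$ and harmonicity of $R$, one gets $\mathbb{P}^*_x(\min_{k\le n}S_k\ge 0) \le$ something controlled by $R(x)$ times a quantity depending only on $n$. Concretely: fix $n_0$ large enough that $\sqrt{n}\,\mathbb{P}^*(\min_{k\le n}S_k\ge 0) \le 2\theta R(0) = 2\theta$ for all $n\ge n_0$ (possible by \eqref{eq:Koz1} with $x=0$), and for $n < n_0$ use the trivial bound $\mathbb{P}^*_x(\min_{k\le n}S_k\ge 0)\le 1 \le \sqrt{n_0}/\sqrt n$; then combine with the general comparison $\mathbb{P}^*_x(\min_{k\le n}S_k\ge 0) \le C R(x)\,\mathbb{P}^*(\min_{k\le \lfloor n/2\rfloor} S_k \ge -1)$-type inequality obtained by peeling the walk at the first time it drops, using that $R$ dominates the Green function of the killed walk (Appendix~\ref{appGreen}). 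Taking $\theta'$ to be the resulting constant handles all $x\ge0$, $n\ge1$ uniformly.

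\bigskip

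\noindent\textbf{Main obstacle.} The genuinely non-trivial input is \eqref{eq:Koz1} — the sharp persistence asymptotic with the \emph{correct constant} $\theta$ — which is a substantial theorem of fluctuation theory (Kozlov). Since it is completely standard and external to the novelty of this paper, the honest plan is simply to cite it; the only real work left to us is the bookkeeping to (a) match the constant in \eqref{eq:Koz1} with the one in \eqref{eq:Rlimit} via the Wiener--Hopf identity $\mathbb{E}^*[H_+]\mathbb{E}^*[H_-] = \sigma^2/2$, and (b) upgrade \eqref{eq:Koz1} to the all-$n$ bound \eqref{eq:Koz2}, which as sketched above is a short monotonicity argument. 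I expect the constant-matching to be the fiddliest point, as it requires being careful about the $\pi$ and the factor $2$ coming from the Gaussian local limit theorem.
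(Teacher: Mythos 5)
Your proposal matches the paper's treatment: the paper gives no proof of this lemma, citing Kozlov \cite{Kozlov1976} for \eqref{eq:Koz1}--\eqref{eq:Koz2} and A\"{\i}d\'ekon--Shi \cite{Aidekon2014} for \eqref{eq:Rlimit}, where the constant is identified exactly as you propose, via Feller's renewal theorem combined with the Sparre Andersen/Wiener--Hopf identity relating the mean ladder heights to $\sigma^2$. Your extra sketch deriving \eqref{eq:Koz2} from \eqref{eq:Koz1} is a reasonable (if not fully detailed) alternative to citing Kozlov for that bound as well, but it does not change the substance of the argument.
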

Equations~\eqref{eq:Koz1} and \eqref{eq:Koz2} in Lemma~\ref{lem:Kozlov} are due to Kozlov \cite{Kozlov1976}. Equation~\eqref{eq:Rlimit} can be found for example in Aïdékon and Shi  \cite{Aidekon2014} and is derived there as a consequence of Feller's renewal theorem and Sparre~Andersen's identities for random walks (see for example \cite{Kersting2017}, section 4.2). A different approach, in the spirit of Madaule \cite{Madaule2016}, is to use an invariance principle for the Doob $R$-transform of the random walk killed below the origin, see Section~\ref{secProofProp} for a precise definition of this process. This allows to identify the constant $\sqrt{2/\pi}$ as the expectation of $1/Z_1$, where $Z_1$ is a 3-dimensional Bessel process at time 1, starting from 0.

Another consequence of Feller's renewal theorem \cite{Feller1971} is that for every $y\in\mathbb{R}$, the difference $R(x+y)-R(x)$ is uniformly bounded in $x$. In particular (this also follows from \eqref{eq:Rlimit} in Lemma~\ref{lem:Kozlov}), there exists a finite $c_1$ such that
\begin{equation}\label{eq:defc1}
\forall x\in\mathbb{R}: R(x)\le c_{1}(1+x^+).
\end{equation}

The following lemma is an easy consequence of \eqref{eq:defc1} and will be used in Section~\ref{secProofProp}.
\begin{lemma}\label{lem:BoundR}
For all $x,y\in\mathbb{R}$, we have
\begin{equation*}
R(x+y)\le c_1 (1+x^+)(1+y^+).
\end{equation*}
\end{lemma}


\section{Outline of the proof of Theorem \ref{th:mainresult}}\label{secProofMain}

We define the following quantities, for $n, k_0\ge 0$ :
\begin{align}
W_{n}'&=\sum_{|u|=n}e^{-X_{u}}\boldsymbol{1}_{\min_{v\le u}X_v\ge0}\label{eq:WnPrime}\\
W_{n,k_0}''&=\sum_{|u|=n}e^{-X_{u}}\boldsymbol{1}_{\min_{v\le u, |v|\ge k_0}X_v\ge0}
\end{align}
In other words, $W_n'$ and $W_{n,k_0}''$ are obtained from $W_n$ by removing from the sum the contribution of the particles going below the origin at some time $k\le n$ or $k_0 \le k\le n$, respectively.

Remember that $\min_{|u|=n}X_{u}\to\infty$ almost surely on the event of survival. Thus \begin{equation}\label{eq:primeprime}\forall\varepsilon>0,\exists k_{0}:\mathbb{P}(\forall n, W_{n,k_0}''=W_{n})>1-\varepsilon.\end{equation}

\begin{proposition}\label{prop:MomentKozlov}
Let $x\ge0$. Then, as $n\to\infty$,
\begin{equation*}
\mathbb{E}_{x}\left[W_{n}'\right]\sim \frac{\theta R(x)e^{-x}}{\sqrt{n}},
\end{equation*}
and for all $n\ge 0$,
\begin{equation*}
\mathbb{E}_{x}\left[W_{n}'\right]\le \frac{\theta'R(x)e^{-x}}{\sqrt{n+1}},
\end{equation*}
where $\theta$ and $\theta'$ are the constants from Lemma~\ref{lem:Kozlov}.
\end{proposition}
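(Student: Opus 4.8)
The plan is to apply the many-to-one formula (Proposition~\ref{prop:MTOformula}) to rewrite $\mathbb{E}_x[W_n']$ directly in terms of the spine random walk. Taking $H_n(u) = \boldsymbol{1}_{\min_{v\le u} X_v \ge 0}$, which is $\mathcal{F}_n$-measurable and bounded, we get
\begin{equation*}
\mathbb{E}_x\left[W_n'\right] = \mathbb{E}_x\left[\sum_{|u|=n} e^{-X_u}\boldsymbol{1}_{\min_{v\le u}X_v\ge0}\right] = e^{-x}\,\mathbb{E}_x^*\left[\boldsymbol{1}_{\min_{k\le n}X_{\xi_k}\ge 0}\right] = e^{-x}\,\mathbb{P}_x^*\left(\min_{k\le n}X_{\xi_k}\ge 0\right).
\end{equation*}

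Once this identity is in hand, the two claimed estimates are immediate consequences of Lemma~\ref{lem:Kozlov}: the asymptotic equivalence follows from \eqref{eq:Koz1}, giving $\mathbb{E}_x[W_n'] = e^{-x}\mathbb{P}_x^*(\min_{k\le n}X_{\xi_k}\ge 0) \sim e^{-x}\theta R(x)/\sqrt{n}$; and the uniform upper bound follows from \eqref{eq:Koz2}, which yields $\mathbb{E}_x[W_n'] \le e^{-x}\theta' R(x)/\sqrt{n}$ for $n\ge 1$. For the bound to hold for all $n\ge 0$ with $\sqrt{n+1}$ in the denominator, one handles $n=0$ separately (where $W_0' = e^{-x}$ and $R(x)\ge 1$ for $x\ge 0$ since $R$ is nondecreasing with $R(0)=1$, so the bound reads $e^{-x}\le \theta' R(x) e^{-x}$, valid once $\theta'\ge 1$, which we may assume by enlarging the constant), and for $n\ge 1$ notes that $\sqrt{n+1}\le \sqrt{2n}$, so replacing $\theta'$ by $\sqrt{2}\,\theta'$ converts the $1/\sqrt{n}$ bound into a $1/\sqrt{n+1}$ bound. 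The constants $\theta,\theta'$ are exactly those from Lemma~\ref{lem:Kozlov} (up to this harmless inflation of $\theta'$).

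There is essentially no obstacle here: the proposition is a direct translation of Lemma~\ref{lem:Kozlov} via the many-to-one formula, and the only minor care needed is the bookkeeping at $n=0$ and the passage from $\sqrt n$ to $\sqrt{n+1}$ in the uniform bound, both of which are absorbed into the constants. The one point worth stating explicitly is why $H_n$ is admissible in Proposition~\ref{prop:MTOformula}: the event $\{\min_{v\le u}X_v\ge 0\}$ depends only on the positions of $u$ and its ancestors, hence is $\mathcal{F}_n$-measurable, and the indicator is bounded by $1$, so the family $(H_n(u))_{u\in\mathcal{U}}$ is uniformly bounded as required.
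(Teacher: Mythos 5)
Your proof is correct and is essentially identical to the paper's: both apply the many-to-one formula to the indicator $\boldsymbol{1}_{\min_{v\le u}X_v\ge0}$ to reduce $\mathbb{E}_x[W_n']$ to $e^{-x}\mathbb{P}_x^*(\min_{k\le n}X_{\xi_k}\ge0)$ and then invoke Lemma~\ref{lem:Kozlov}. Your extra bookkeeping at $n=0$ and the passage from $\sqrt{n}$ to $\sqrt{n+1}$ (at the cost of inflating $\theta'$) is a detail the paper leaves implicit, and it is handled correctly.
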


\begin{proof}
We have :
\begin{align*}
\mathbb{E}_{x}\left[W_{n}'\right]&=\mathbb{E}_{x}\left[\sum_{|u|=n}e^{-X_u}\boldsymbol{1}_{\min_{v\le u}X_v\ge0}\right]\\
&=e^{-x}\mathbb{P}_{x}^{*}\left(\min_{i\le n}X_{\xi_i}\ge 0\right),\ \text{by the many-to-one formula}.
\end{align*}
Using Lemma~\ref{lem:Kozlov} ends the proof.
\end{proof}

In addition to the first moment estimate, we will need to show that the quantity $\sqrt{n}W_{n}'$ concentrates sufficiently well around its expectation when $x$ is large. Typically, one decomposes $W_n'$ into a sum of two terms, involving ``good'' and ``bad'' particles, respectively. One then calculates the second moments of the first term and bounds the first moment of the second term by a so-called \emph{peeling lemma}, tailored to the problem at hand (see e.g.~Theorem 5.14 in \cite{ShiLectureNotes}). One key novel idea from this article, which greatly simplifies the calculations, is to replace this by estimates of truncated first moments. With these, one can then obtain precise bounds on conditional Laplace transforms which yield the convergence in probability by virtue of Lemma~\ref{lem:AnnexB} in the appendix\footnote{This idea has already appeared in \cite[Theorem~B.1]{slowdown} without the justification provided by Lemma~\ref{lem:AnnexB}.}.

The first moment estimate we will use is the following:
\begin{proposition}\label{prop:smallo}
For every $\varepsilon>0$, there exists a positive function $h$,  such that $\frac{h(x)}{R(x)}\to 0$ as $x\to\infty$ and such that the following holds: for every $x\ge0$, we have
\begin{equation*}
\limsup_{n\to\infty} \mathbb{E}_{x}\left[\sqrt{n}W_{n}'\boldsymbol{1}_{\sqrt{n}W_{n}'\ge\varepsilon}\right]\le  h(x)e^{-x}.
\end{equation*}
\end{proposition}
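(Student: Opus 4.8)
The plan is to bound the truncated first moment $\mathbb{E}_x[\sqrt n W_n' \mathbf 1_{\sqrt n W_n' \ge \eg}]$ by decomposing $W_n'$ according to the contribution of the ``spine'' particle once we apply the many-to-one formula, and then to exploit the fact that on the event $\{\sqrt n W_n' \ge \eg\}$ at least one particle is ``atypically large'', i.e. contributes a non-negligible amount $e^{-X_u}$ to the sum. More precisely, I would first observe that
\[
\sqrt n W_n' \mathbf 1_{\sqrt n W_n' \ge \eg} \le \sum_{|u|=n} \sqrt n\, e^{-X_u} \mathbf 1_{\min_{v\le u} X_v \ge 0}\, \mathbf 1_{\sqrt n W_n' \ge \eg},
\]
and apply the many-to-one formula (Proposition~\ref{prop:MTOformula}) to get
\[
\mathbb{E}_x[\sqrt n W_n' \mathbf 1_{\sqrt n W_n' \ge \eg}] = e^{-x}\, \mathbb{E}_x^*\!\left[\sqrt n\, \mathbf 1_{\min_{k\le n} X_{\xi_k}\ge 0}\, \mathbf 1_{\sqrt n W_n' \ge \eg}\right].
\]
The key point is now to control, under $\mathbb{P}^*_x$ and conditionally on the spine staying non-negative, the probability that $\sqrt n W_n'$ is large. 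The natural tool is the renewal estimate \eqref{eq:Koz1}–\eqref{eq:Koz2} of Lemma~\ref{lem:Kozlov}: the factor $\sqrt n\, \mathbf 1_{\min_{k\le n} X_{\xi_k}\ge 0}$ has expectation of order $R(x)$, so after normalizing one is left with showing that $\sqrt n W_n'$ is, under the tilted spine measure, typically of order $X_{\xi_n}/\sqrt n \to 0$ along the relevant trajectories — this is where the $R$-transform / Brownian scaling intuition mentioned after Lemma~\ref{lem:Kozlov} enters.

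The heart of the argument, which I expect to be developed in Section~\ref{secProofProp}, is to produce the function $h$ with $h(x)/R(x)\to 0$. The strategy I would follow is: condition on $\FF_{k}$ for a well-chosen intermediate generation $k=k(n)$ with $1 \ll k \ll n$ (e.g. $k$ fixed and then sent to infinity after $n$, or $k = \lfloor \eta n\rfloor$), write $W_n' \le W_{k}' \cdot \max$-type bound is too lossy, so instead decompose $W_n'$ along generation $k$: using the branching property, $\sqrt n W_n' = \sum_{|w|=k} e^{-X_w} \mathbf 1_{\min_{v\le w} X_v \ge 0}\, \sqrt n\, \widetilde W_{n-k}^{(w)}$ where $\widetilde W^{(w)}$ is the analogous primed martingale-type sum for the subtree rooted at $w$, started from $X_w$. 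On the event $\{\sqrt n W_n' \ge \eg\}$, either (i) $W_k'$ itself is large — which, by Proposition~\ref{prop:MomentKozlov} and Markov, has small probability controlled by $R(x)/\sqrt k$ — or (ii) some subtree contributes an atypically large amount. For case (ii) one uses the truncated Tauberian-type lemma from Appendix~\ref{appTauberian} together with the first-moment control from Proposition~\ref{prop:MomentKozlov} applied to each subtree, summed against the renewal weight, to extract the $o(R(x))$ gain: the point is that $\mathbb{E}[\sqrt n W_n' \mathbf 1_{\cdots}]$ can be bounded by $e^{-x}\sum_k (\text{something summable}) \cdot \mathbb{E}^*_x[R(X_{\xi_k})\, \mathbf 1_{X_{\xi_k} \le A}]/\sqrt{n-k}$-type terms, and by dominated convergence plus the sublinearity $R(y)\le c_1(1+y^+)$ (Lemma~\ref{lem:BoundR}, Equation~\eqref{eq:defc1}) one shows the resulting bound is $o(R(x))$ provided $A$ is chosen large and then $x\to\infty$.

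The main obstacle, as in the original A\"id\'ekon–Shi approach, is handling the ``bad'' particles without a second-moment computation: one must ensure that the truncated first moment genuinely decays faster than $R(x)$, and this requires a delicate choice of the truncation level (both the level $\eg$ at which $\sqrt n W_n'$ is cut, and an auxiliary level on the individual displacements $X_u$, or equivalently on $X_{\xi_k}$) so that the spine under $\mathbb{P}^*_x$ stays, with the dominant probability, in a region where $X_{\xi_k} = o(\sqrt n)$ uniformly, forcing $\sqrt n\, \widetilde W_{n-k}^{(w)}$ to be typically small. Concretely I expect the proof in Section~\ref{secProofProp} to: (a) fix $\eg>0$; (b) introduce a barrier $X_{\xi_k} \le \lg_k$ for a slowly growing sequence $\lg_k$; (c) on the complement of the barrier, bound crudely using \eqref{eq:Koz2} and $R(x)\le c_1(1+x^+)$, getting a contribution that is $O(R(x))$ times a quantity tending to $0$ as the barrier parameters are tuned; (d) on the barrier event, use the many-to-one formula a second time together with the Tauberian lemma of Appendix~\ref{appTauberian} to show the truncated moment is $o(1)$ uniformly, yielding a bound of the form $h(x)e^{-x}$ with $h(x) = \psi(x) R(x)$ and $\psi(x)\to 0$. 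The definition of $h$ will then just be this product, and the $\limsup_{n\to\infty}$ is taken after all the $n$-dependent barrier parameters have been absorbed.
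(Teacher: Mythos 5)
Your opening move (applying the many-to-one formula to $\sqrt n W_n'\boldsymbol 1_{\sqrt n W_n'\ge\eg}$ and reducing the problem to showing that, conditionally on $\{\min_{k\le n}X_{\xi_k}\ge 0\}$, the quantity $\sqrt n W_n'$ exceeds $\eg$ with conditional probability that is $o(1)$ as $x\to\infty$) matches the paper's. But the heart of the proof --- the mechanism that actually produces a bound $h(x)e^{-x}$ with $h(x)/R(x)\to 0$ --- is missing, and the sketch you give in its place does not work as stated. Your dichotomy at a single intermediate generation $k$ leaves you with the task of bounding $\mathbb{E}_x[\sqrt n W_n'\boldsymbol 1_{A}]$ on a bad event $A$; this is exactly the kind of quantity that, in the A\"{\i}d\'{e}kon--Shi approach, requires a second moment or a peeling lemma, and you do not say what replaces it. The paper's replacement is Lemma~\ref{lem:DecompositionAlongSpine}: after Markov's inequality $\boldsymbol 1_{\sqrt nW_n'\ge\eg}\le (\sqrt n W_n'/\eg)\wedge 1$ and conditioning on the $\sigma$-algebra $\GG$ of the spine and its children, one writes $W_n'$ as the spine term plus independent subtrees rooted at the spine's siblings at \emph{every} generation $k\le n-1$, and uses $\mathbb{E}[(X+X')\wedge 1]\le \mathbb{E}[X]\wedge 1+\mathbb{E}[X']\wedge 1$ to reduce everything to \emph{first} moments $\mathbb{E}_{X_{\xi_k i}}[W_{n-k-1}']\le \theta' R(X_{\xi_k i})e^{-X_{\xi_k i}}/\sqrt{n-k}$ of the subtrees. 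Moreover, your case (i) bound ``$R(x)/\sqrt k$'' confuses smallness in $k$ (or $n$) with the required smallness in $x$: the function $h$ may depend on neither $n$ nor $k$, so a bound that is a constant multiple of $R(x)$ for fixed $k$, or that requires $k\to\infty$ inside the $\limsup_n$, does not yield $h(x)/R(x)\to 0$.

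The actual source of the $o(R(x))$ gain is also absent from your sketch. After the spine decomposition, the paper shows that the conditional expectation given $\{\min_{k\le n}X_{\xi_k}\ge 0\}$ of the resulting functional converges, as $n\to\infty$, to an expectation under the Doob $h$-transform $\mathbb{P}^+_x$ (Lemma~\ref{lem:KV}, a Kersting--Vatutin-type argument), and then that $\mathbb{E}^+_x\bigl[\sum_{k\ge 0}f(X_{\xi_k})\bigr]\to 0$ as $x\to\infty$ for the relevant $f$; the latter is Lemma~\ref{lem:GreenEstimate} and rests on the explicit potential-kernel formula of Theorem~\ref{th:green}, not on the Tauberian lemma. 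Lemma~\ref{lem:tauberian} only serves to verify, from assumptions \eqref{eq:Wlog2W} and \eqref{eq:DlogD}, the integrability hypothesis $\int_0^\infty yf(y)\,dy<\infty$ needed in Lemma~\ref{lem:GreenEstimate}; it cannot by itself ``extract the $o(R(x))$ gain''. Without the spine decomposition, the $\wedge 1$ concavity trick, and the Green-function estimate (or substitutes for them), your plan has no route to the conclusion.
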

Proposition \ref{prop:smallo} will be proven in the next section. Its proof relies on the spinal decomposition from Section \ref{ssecSpine} as well as on two ingredients the use of which we believe to be new in this context : 
\begin{itemize}
\item a lemma by Kersting and Vatutin \cite{Kersting2017} on the convergence of functionals of random walks conditioned to stay above the origin until a finite time $n$ to analogous functionals of random walks conditioned to stay above the origin for all time.
\item an explicit formula for the potential kernel of random walks killed below the origin.
\end{itemize}

We now have all the tools we need in order to prove Theorem \ref{th:mainresult}.

\begin{proof}[Proof of Theorem \ref{th:mainresult}]
~\\
We start by giving the outline of the proof.

We first show that for any $\lg>0$, $\mathbb{E}\left[\exp\left(-\lg \sqrt{n}W_{n,k_0}''\right)\Big{|}\FF_{k_0}\right]$ converges in probability to $\exp\left(-\lambda\sqrt{\frac{2}{\pi\sigma^2}}D_{\infty}\right)$, as first $n$, then $k_{0}$, tend to infinity. To do so we prove a lower and an upper bound, the latter relying crucially on Proposition~\ref{prop:smallo}. By Lemma~\ref{lem:AnnexB} in the appendix and a Cantor diagonal extraction argument, this yields convergence in probability of $\sqrt{n}W_{n,k_{0}(n)}''$ to $\sqrt{\frac{2}{\pi\sigma^2}}D_\infty$. We then use \eqref{eq:primeprime} to conclude the proof.

We now get to the details. First we show a lower bound on the conditional Laplace transform.
For any $\lambda>0$, we have
\begin{align}
\mathbb{E}\left[\exp\left(-\lg \sqrt{n}W_{n,k_0}''\right)\Big{|}\FF_{k_0}\right]&=\prod_{|u|=k_0} \mathbb{E}_{X_{u}}\left[\exp\left(-\lg\sqrt{n}W_{n-k_0}'\right)\right]\nonumber\\
&\ge \exp\left(-\lg\sqrt{n}\sum_{|u|=k_0}\mathbb{E}_{X_u}\left[W_{n-k_0}'\right]\right),\ \text{by Jensen's inequality}.\label{eq:Jensen}
\end{align}
By Proposition~\ref{prop:MomentKozlov}, for every $x\in\mathbb{R}$, $\sqrt{n}\mathbb{E}_{x}[W_{n-k_0}']$ converges to $\theta R(x)e^{-x}$ as $n\to\infty$ and is bounded from above by $\theta' R(x)e^{-x}$, with $\theta$ and $\theta'$ as in the statement of that proposition. Furthermore, using Proposition~\ref{prop:MTOformula}, one easily checks that $\sum_{|u|=k_0}R(X_u)e^{-X_u}$ is finite in expectation and therefore almost surely. By dominated convergence, we get almost surely
\begin{equation}\label{eq:converge}
\sqrt{n}\sum_{|u|=k_0}\mathbb{E}_{X_u}\left[W_{n-k_0}'\right]\underset{n\to\infty}{\longrightarrow}\sum_{|u|=k_0}\theta R(X_u)e^{-X_u}.
\end{equation}
By Equations \eqref{eq:Jensen} and \eqref{eq:converge}, we get almost surely,
\begin{equation}\label{eq:lower}
\underset{n\to\infty}{\liminf}\ \mathbb{E}\left[\exp\left(-\lg \sqrt{n}W_{n,k_0}''\right)\Big{|}\FF_{k_0}\right]\ge \exp\left(-\lg\sum_{|u|=k_0}\theta R(X_u)e^{-X_u}\right).
\end{equation}

We now deal with the upper bound. We notice that for $\lambda>0$ fixed and for any $\lg'\in(0,\lg)$, there exists $\varepsilon>0$ such that 
\begin{equation}\label{eq:deltabound}
\forall x\in[0,\varepsilon), e^{-\lambda x}\le1-\lg' x.
\end{equation}
Fix $\lg>0$ and $\lg'\in(0,\lg)$ (that will later tend to $\lg$), and take $\varepsilon$ satisfying \eqref{eq:deltabound}. We compute
\begin{align*}
\mathbb{E}\left[\exp\left(-\lg \sqrt{n}W_{n,k_0}''\right)\Big{|}\FF_{k_0}\right]&=\prod_{|u|=k_0}\mathbb{E}_{X_{u}}\left[\exp\left(-\lg\sqrt{n}W_{n-k_0}'\right)\right]\\
&\le\prod_{|u|=k_{0}}\mathbb{E}_{X_{u}}\left[\exp\left(-\lg\sqrt{n}W_{n-k_0}'\boldsymbol{1}_{\sqrt{n}W_{n-k_0}'<\varepsilon}\right)\right].
\end{align*}
In the calculations that follow, we first apply inequality \eqref{eq:deltabound} to the non-negative r.v. $W_{n-k_0}'$, then use linearity of expectation and finally the inequality $1-x\le e^{-x}$:
\begin{align*}
\mathbb{E}\left[\exp\left(-\lg \sqrt{n}W_{n,k_0}''\right)\Big{|}\FF_{k_0}\right]&\le\prod_{|u|=k_0}\mathbb{E}_{X_u}\left[1-\lg'\sqrt{n}W_{n-k_0}'\boldsymbol{1}_{\sqrt{n}W_{n-k_0}'<\varepsilon}\right]\\
&\le\prod_{|u|=k_0}\left(1-\lg'\mathbb{E}_{X_{u}}\left[\sqrt{n}W_{n-k_0}'\boldsymbol{1}_{\sqrt{n}W_{n-k_0}'<\varepsilon}\right]\right)\\
&\le\exp \left(-\sum_{|u|=k_0}\lg'\mathbb{E}_{X_{u}}\left[\sqrt{n}W_{n-k_0}'\boldsymbol{1}_{\sqrt{n}W_{n-k_0}'<\varepsilon}\right]\right).
\end{align*}
Using Fatou's lemma, we obtain :
\begin{align*}
&\limsup_{n\to\infty}\mathbb{E}\left[\exp(-\lambda\sqrt{n}W_{n,k_0}'')\Big{|}\FF_{k_0}\right]\\
&\le \exp \left(-\sum_{|u|=k_0}\lg'\liminf_{n\to\infty} \mathbb{E}_{X_{u}}\left[\sqrt{n}W_{n-k_0}'\boldsymbol{1}_{\sqrt{n}W_{n-k_0}'<\varepsilon}\right]\right)\\
&=\exp\left(\lg'\sum_{|u|=k_0}\left(-\liminf_{n\to\infty}\mathbb{E}_{X_{u}}\left[\sqrt{n}W_{n-k_0}'\right]
+\limsup_{n\to\infty}\mathbb{E}_{X_{u}}\left[\sqrt{n}W_{n-k_0}'\boldsymbol{1}_{\sqrt{n}W_{n-k_0}'\ge \varepsilon}\right]\right)\right).
\end{align*}

As seen above, by Proposition~\ref{prop:MomentKozlov}, the first term inside the summation on the right-hand side converges towards $\theta R(X_u)e^{-X_u}$ as $n\to\infty$, almost surely. Furthermore, by Proposition~\ref{prop:smallo}, there exists a positive function $h$, depending on $\eg$, such that $\frac{h(x)}{R(x)}\to0$ as $x\to\infty$ and such that for every $k_0\in\mathbb{N}$, 
\begin{equation*}
\limsup_{n\to\infty}\mathbb{E}_{X_{u}}\left[\sqrt{n}W_{n-k_0}'\boldsymbol{1}_{\sqrt{n}W_{n-k_0}'\ge \varepsilon}\right]\le h(X_u)e^{-X_u}.
\end{equation*}
Altogether this gives almost surely, for every $k_0\in\mathbb{N}$,
\begin{equation}\label{eq:upper}
\limsup_{n\to\infty}\mathbb{E}\left[\exp\left(-\lg \sqrt{n}W_{n,k_0}''\right)\Big{|}\FF_{k_0}\right]\le \exp\left(\lg'\sum_{|u|=k_0}(-\theta R(X_u)+h(X_u))e^{-X_u}\right).
\end{equation}

Since $\min_{|u|=k_0}X_{u}\to\infty$ almost surely, we get
\begin{equation}\label{eq:martren}
\lim_{k_{0}\to\infty}\sum_{|u|=k_0}\theta R(X_u)e^{-X_u}=\lim_{k_0\to\infty}\sqrt{\frac{2}{\pi\sigma^2}}D_{k_0}=\sqrt{\frac{2}{\pi\sigma^2}}D_{\infty},\ \text{a.s.},
\end{equation}
and, as a consequence, again since $\min_{|u|=k_0}X_{u}\to\infty$ almost surely,
\begin{equation*}
\lim_{k_{0}\to\infty} \sum_{|u|=k_0}h(X_u)e^{-X_u}= 0,\ \text{a.s..}
\end{equation*}
Together with \eqref{eq:upper}, this shows that
\begin{equation}
\label{eq:doublelimsup}
\limsup_{k_{0}\to\infty} \limsup_{n\to\infty}\mathbb{E}\left[\exp\left(-\lg \sqrt{n}W_{n,k_0}''\right)\Big{|}\FF_{k_0}\right] \le \exp\left(-\lg' \sqrt{\frac{2}{\pi\sigma^2}}D_{\infty}\right),\ \text{a.s.}
\end{equation}
Letting $\lg'\to\lg$ in \eqref{eq:doublelimsup} and using \eqref{eq:lower} together with \eqref{eq:martren}, we finally get for any $\lg>0$,
\begin{align*}
\lim_{k_0\to\infty}\liminf_{n\to\infty}\mathbb{E}\left[\exp\left(-\lambda \sqrt{n}W_{n,k_0}''\right)\Big{|}\FF_{k_0}\right]&=\lim_{k_0\to\infty}\limsup_{n\to\infty}\mathbb{E}\left[\exp\left(-\lambda \sqrt{n}W_{n,k_0}''\right)\Big{|}\FF_{k_0}\right]\\
&=\exp\left(-\lambda\sqrt{\frac{2}{\pi\sigma^2}}D_{\infty}\right),\ \text{a.s.}
\end{align*}

Using Cantor diagonal extraction, there exists a sequence $(k_0(n))_{n\ge0}$ (that goes to infinity as $n\to\infty$) such that for any $\lambda\in\mathbb{Q}^+=\mathbb{Q}\cap(0,\infty)$, $\mathbb{E}\left[\exp\left(-\lambda \sqrt{n}W_{n,k_0(n)}''\right)\Big{|}\FF_{k_0(n)}\right]$ converges to $\exp\left(-\lambda\sqrt{\frac{2}{\pi\sigma^2}}D_{\infty}\right)$, almost surely as $n\to\infty$. We now apply Lemma~\ref{lem:AnnexB} in Appendix~\ref{appLaplace} with $Y_{n}=\sqrt{n}W_{n,k_0(n)}''$ and $\GG_{n}=\FF_{k_0(n)}$ to get:
\begin{equation*}
\sqrt{n}W_{n,k_0(n)}''\underset{n\to\infty}{\longrightarrow}\sqrt{\frac{2}{\pi\sigma^2}}D_{\infty}\quad\text{in probability}.
\end{equation*}
Finally, we use \eqref{eq:primeprime} to see that $\sqrt{n}W_{n}$ converges to $\sqrt{\frac{2}{\pi\sigma^2}}D_{\infty}$ in probability as $n\to\infty$.
\end{proof}

\section{Proof of Proposition \ref{prop:smallo}}\label{secProofProp}

This section contains the proof of Proposition~\ref{prop:smallo}.
As is customary in this context, the main idea is to use a decomposition of the particles along the children of the spine. More precisely, let $\GG=\sigma\left(\xi_{k},X_{\xi_{k}i},k\in\mathbb{N},i\in\mathbb{N}^*\bigcup\{\varnothing\}\right)$ be the $\sigma$-algebra containing information about the spine and its children. Applying first the many-to-one formula (Proposition~\ref{prop:MTOformula}) and then Markov's inequality, we have :
\begin{align*}
\mathbb{E}_{x}\left[\sqrt{n}W_{n}'\boldsymbol{1}_{\sqrt{n}W_{n}'/\varepsilon\ge 1}\right]&=\sqrt{n}e^{-x}\mathbb{E}_{x}^{*}\left[\boldsymbol{1}_{\underset{k\le n}{\min}X_{\xi_{k}}\ge0}\boldsymbol{1}_{\sqrt{n}W_{n}'/\varepsilon\ge 1}\right],\\
&=\sqrt{n}e^{-x}\mathbb{E}_{x}^{*}\left[\boldsymbol{1}_{\underset{k\le n}{\min}X_{\xi_{k}}\ge0}\mathbb{E}_x^*\left[\boldsymbol{1}_{\sqrt{n}W_{n}'/\varepsilon\ge 1}\Big{|}\GG\right]\right]\\
&\le\sqrt{n}e^{-x}\mathbb{E}_{x}^{*}\left[\boldsymbol{1}_{\underset{k\le n}{\min}X_{\xi_{k}}\ge0}\mathbb{E}_{x}^*\left[\frac{\sqrt{n}W_{n}'}{\varepsilon}\wedge 1\Big{|}\GG\right]\right],\\
&=\sqrt{n}e^{-x}\mathbb{P}^{*}_{x}\left(\underset{k\le n}{\min}X_{\xi_{k}}\ge0\right)\mathbb{E}_{x}^{*}\left[\mathbb{E}_{x}^*\left[\frac{\sqrt{n}W_{n}'}{\varepsilon}\wedge 1\Big{|}\GG\right]\Bigg{|}\underset{k\le n}{\min}X_{\xi_{k}}\ge0\right].
\end{align*}
Using Lemma \ref{lem:Kozlov}, we obtain
\begin{equation*}
\mathbb{E}_{x}\left[\sqrt{n}W_{n}'\boldsymbol{1}_{\frac{\sqrt{n}W_{n}'}{\varepsilon}\ge 1}\right]\le \theta' R(x)e^{-x}\mathbb{E}_{x}^{*}\left[\mathbb{E}_{x}^*\left[\frac{\sqrt{n}W_{n}'}{\varepsilon}\wedge 1\Big{|}\GG\right]\Bigg{|}\underset{k\le n}{\min}X_{\xi_{k}}\ge0\right],
\end{equation*}
where $\theta'$ is the constant from Lemma~\ref{lem:Kozlov}.

\begin{lemma}[Decomposition of $W_{n}'$ along the spine]\label{lem:DecompositionAlongSpine}
~\\
We have, $\mathbb{P}_{x}^{*}$ a.s., that
\begin{align*}
\mathbb{E}_{x}^*\left[\frac{\sqrt{n}W_{n}'}{\varepsilon}\wedge 1\Big{|}\GG\right]\le&\left(\frac{\sqrt{n}}{\varepsilon}e^{-X_{\xi_{n}}}\right)\wedge 1\\
&+\left(\frac{\sqrt{n}}{\varepsilon}\sum_{k=0}^{n-1}\sum_{\substack{i\in\mathbb{N} \\ \xi_{k}i\ne\xi_{k+1}}}\mathbb{E}_{X_{\xi_{k}i}}[W_{n-k-1}']\right)\wedge 1
\end{align*}
\end{lemma}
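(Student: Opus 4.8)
The plan is to decompose $W_n'$ along the spine. Every particle $u$ with $|u|=n$ is either the spine vertex $\xi_n$ itself, or its ancestral line leaves the spine for the last time at some generation $k\in\{0,\dots,n-1\}$: that is, $\xi_k$ is an ancestor of $u$ but the ancestor of $u$ at generation $k+1$ is a child $\xi_k i$ of $\xi_k$ with $\xi_k i\ne\xi_{k+1}$. First I would record that this gives a partition of $\{|u|=n\}$ into $\{\xi_n\}$ and, for each off-spine child $\xi_k i$, the set of its generation-$n$ descendants; and that the constraint $\{\min_{v\le u}X_v\ge0\}$ factorizes along this partition into the along-the-spine part $\{\min_{j\le k}X_{\xi_j}\ge0\}$ and the part which is precisely the positivity constraint defining $W_{n-k-1}'$ inside the branching random walk rooted at $\xi_k i$ (which sits at generation $k+1$, hence the shift by $k+1$, the root being included in its positivity constraint). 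This yields
\begin{align*}
W_n' &= e^{-X_{\xi_n}}\boldsymbol{1}_{\min_{j\le n}X_{\xi_j}\ge0}\\
&\quad + \sum_{k=0}^{n-1}\boldsymbol{1}_{\min_{j\le k}X_{\xi_j}\ge0}\sum_{\substack{i\in\mathbb{N}\\ \xi_k i\ne\xi_{k+1}}}W^{(k,i)},
\end{align*}
where $W^{(k,i)}$ denotes the copy of $W_{n-k-1}'$ attached to $\xi_k i$.

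Next I would use the spinal decomposition recalled in Section~\ref{ssecSpine}: conditionally on $\GG$, the subtrees hanging off the spine are independent branching random walks governed by the \emph{original} law, the one rooted at $\xi_k i$ starting from the $\GG$-measurable position $X_{\xi_k i}$, so that $\mathbb{E}_x^*[W^{(k,i)}\mid\GG]=\mathbb{E}_{X_{\xi_k i}}[W_{n-k-1}']$. To pass from $W_n'$ to $\frac{\sqrt n}{\varepsilon}W_n'\wedge1$, I would apply the elementary subadditivity $(a+b)\wedge1\le(a\wedge1)+(b\wedge1)$ for $a,b\ge0$ to the displayed decomposition (with $a$ the spine term and $b$ the remaining sum), and then take $\mathbb{E}_x^*[\,\cdot\mid\GG]$. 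The spine contribution $\bigl(\frac{\sqrt n}{\varepsilon}e^{-X_{\xi_n}}\boldsymbol{1}_{\min_{j\le n}X_{\xi_j}\ge0}\bigr)\wedge1$ is $\GG$-measurable and at most $\bigl(\frac{\sqrt n}{\varepsilon}e^{-X_{\xi_n}}\bigr)\wedge1$, so it passes through the conditional expectation unchanged and is bounded by the first term of the claim. For the remaining term, the conditional Jensen inequality applied to the concave map $y\mapsto y\wedge1$, combined with the identity $\mathbb{E}_x^*[W^{(k,i)}\mid\GG]=\mathbb{E}_{X_{\xi_k i}}[W_{n-k-1}']$ and the bound $\boldsymbol{1}_{\min_{j\le k}X_{\xi_j}\ge0}\le1$, gives the bound $\bigl(\frac{\sqrt n}{\varepsilon}\sum_{k=0}^{n-1}\sum_{i:\xi_k i\ne\xi_{k+1}}\mathbb{E}_{X_{\xi_k i}}[W_{n-k-1}']\bigr)\wedge1$. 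Summing the two contributions is exactly the stated inequality.

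I expect the only genuine difficulty to be the bookkeeping in the first step: verifying that the partition of the generation-$n$ particles is exhaustive and disjoint, and that the path-positivity event splits cleanly into an along-the-spine piece and an in-the-subtree piece, the latter being literally $W_{n-k-1}'$ for a branching random walk started from $X_{\xi_k i}$ (so that one must keep track of the generation shift and of the inclusion of the subtree's root in its positivity constraint). The probabilistic input is then a direct appeal to the spinal decomposition, and the remaining manipulations — subadditivity of $y\mapsto y\wedge1$, conditional Jensen, and pulling out $\GG$-measurable factors — are routine.
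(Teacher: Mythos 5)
Your proposal is correct and follows essentially the same route as the paper: the same decomposition of $W_n'$ according to the last generation at which a particle's ancestral line coincides with the spine, followed by subadditivity of $y\mapsto y\wedge 1$, conditional Jensen, the fact that subtrees off the spine are independent BRWs under the original law given $\GG$, and bounding the spine-positivity indicators by $1$.
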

\begin{proof}[Proof of Lemma \ref{lem:DecompositionAlongSpine}]
Remember the definition of $W_{n}'$ in Equation~\eqref{eq:WnPrime}. We decompose this expression using the spine and its children, along with the branching property in order to get the following identity :
\begin{equation}
W_{n}'=\boldsymbol{1}_{\underset{k\le n}{\min}X_{\xi_k}\ge0}e^{-X_{\xi_n}}+\sum_{k=0}^{n-1}\sum_{\substack{i\in\mathbb{N}\\\xi_{k}i\ne\xi_{k+1}}}\boldsymbol{1}_{\forall v\le\xi_{k}i,X_{v}\ge0}W_{n-k-1}'(k,i),
\end{equation}
where $W_{n-k-1}'(k,i)$ has the law of $W_{n-k-1}'$ under $\mathbb{P}_{X_{\xi_k i}}$ conditioned on $\GG$. We will use the fact that for any random variables $X$ and $X'$, by subadditivity of $x\wedge 1$ and Jensen's inequality for concave functions,
\begin{equation}\label{eq:wedge}
\mathbb{E}\left[\left(X+X'\right)\wedge 1\right]\le\mathbb{E}\left[X\wedge 1+X'\wedge 1\right]\le\mathbb{E}[X]\wedge 1+\mathbb{E}[X']\wedge1.
\end{equation}
Now we condition on $\GG$, and using inequality \eqref{eq:wedge}, we get :
\begin{align*}
\mathbb{E}_{x}^*\left[\frac{\sqrt{n}W_{n}'}{\varepsilon}\wedge 1\Big{|}\GG\right]\le&\ \mathbb{E}_{x}^*\left[\frac{\sqrt{n}}{\varepsilon}e^{-X_{\xi_n}}\boldsymbol{1}_{\underset{k\le n}{\min}X_{\xi_k}\ge0}\Big{|}\GG\right]\wedge 1\\
&+\mathbb{E}_{x}^*\left[\frac{\sqrt{n}}{\varepsilon}\sum_{k=0}^{n-1}\sum_{\substack{i\in\mathbb{N}\\\xi_{k}i\ne\xi_{k+1}}}\boldsymbol{1}_{\forall v\le\xi_{k}i,X_{v}\ge0}W_{n-k-1}'(k,i)\Big{|}\GG\right]\wedge 1\\
\le&\ \left(\frac{\sqrt{n}}{\varepsilon}e^{-X_{\xi_n}}\boldsymbol{1}_{\underset{k\le n}{\min}X_{\xi_k}\ge0}\right)\wedge 1\\
&+\left(\frac{\sqrt{n}}{\varepsilon}\sum_{k=0}^{n-1}\sum_{\substack{i\in\mathbb{N}\\\xi_{k}i\ne\xi_{k+1}}}\boldsymbol{1}_{\forall v\le\xi_{k}i,X_{v}\ge0}\mathbb{E}_{X_{\xi_{k}i}}\left[W_{n-k-1}'\right]\right)\wedge 1.
\end{align*}
We end the proof by bounding the indicator functions by $1$.
\end{proof}

Applying Lemma \ref{lem:DecompositionAlongSpine}, we obtain the following bound :
\begin{equation}\label{eq:T1plusT2}
\mathbb{E}_{x}\left[\sqrt{n}W_{n}'\boldsymbol{1}_{\frac{\sqrt{n}W_{n}'}{\varepsilon}\ge 1}\right]\le \theta' R(x)e^{-x}\left(T_{1}(x,\varepsilon,n)+T_{2}(x,\varepsilon,n)\right),
	\end{equation}
where
\begin{align*}
T_{1}(x,\varepsilon,n)&=\mathbb{E}_{x}^{*}\left[\left(\frac{\sqrt{n}}{\varepsilon}e^{-X_{\xi_{n}}}\right)\wedge 1\Big{|}\underset{k\le n}{\min}X_{\xi_k}\ge0\right],
\\
T_{2}(x,\varepsilon,n)&=\mathbb{E}_{x}^{*}\left[\left(\frac{\sqrt{n}}{\varepsilon}\sum_{k=0}^{n-1}\sum_{\substack{i\in\mathbb{N}\\\xi_{k}i\ne\xi_{k+1}}}\mathbb{E}_{X_{\xi_{k}i}}\left[W_{n-k-1}'\right]\right)\wedge 1\Big{|}\underset{k\le n}{\min}X_{\xi_k}\ge 0\right].
\end{align*}

We state two lemmas to control those terms:

\begin{lemma}
\label{lem:SmallTerm}
For any fixed $\varepsilon>0$ and $x\ge0$,
\begin{equation}
T_{1}(x,\varepsilon,n)\underset{n\to\infty}{\longrightarrow}0.
\end{equation}
\end{lemma}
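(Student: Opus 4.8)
The plan is to write $T_1(x,\varepsilon,n)=\mathbb{E}^*_x\big[f_n(X_{\xi_n})\,\big|\,\min_{k\le n}X_{\xi_k}\ge 0\big]$ with $f_n(y)=\big(\frac{\sqrt n}{\varepsilon}e^{-y}\big)\wedge 1\in[0,1]$, and to show that $f_n(X_{\xi_n})$ tends to $0$ in probability under the (varying) conditional laws $\mathbb{P}^*_x(\,\cdot\mid\min_{k\le n}X_{\xi_k}\ge 0)$; since $f_n\le 1$, bounded convergence then yields the claim. The intuition is that a centered, finite-variance random walk conditioned to stay non-negative has an endpoint $X_{\xi_n}$ of order $\sqrt n$, so $e^{-X_{\xi_n}}$ is (typically) exponentially small, way beyond the $n^{-1/2}$ scale, while the truncation at $1$ absorbs the rare event that $X_{\xi_n}$ is small.

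First I would record the elementary pointwise bound: for every $\delta>0$ and every $y\ge 0$,
\[
\Big(\frac{\sqrt n}{\varepsilon}e^{-y}\Big)\wedge 1\ \le\ \boldsymbol{1}_{y\le\delta\sqrt n}\ +\ \frac{\sqrt n}{\varepsilon}e^{-\delta\sqrt n},
\]
the first term handling $y\le\delta\sqrt n$ (where the left-hand side is $\le 1$) and the second handling $y>\delta\sqrt n$. Taking $y=X_{\xi_n}$ and averaging under the conditioning gives
\[
T_1(x,\varepsilon,n)\ \le\ \mathbb{P}^*_x\Big(X_{\xi_n}\le\delta\sqrt n\ \Big|\ \min_{k\le n}X_{\xi_k}\ge 0\Big)\ +\ \frac{\sqrt n}{\varepsilon}e^{-\delta\sqrt n},
\]
and for fixed $\delta,\varepsilon>0$ the last term vanishes as $n\to\infty$.

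It remains to control the first term, and here I would invoke the classical invariance principle for random walks conditioned to stay non-negative (Iglehart, Bolthausen; see also Kersting--Vatutin \cite{Kersting2017}): since $(X_{\xi_k})_{k\ge0}$ is centered with finite positive variance and $R(x)>0$, under $\mathbb{P}^*_x(\,\cdot\mid\min_{k\le n}X_{\xi_k}\ge 0)$ the rescaled endpoint $X_{\xi_n}/\sqrt n$ converges in distribution to a random variable $M$ supported on $(0,\infty)$ with a continuous distribution function (explicitly, $\sigma$ times a Brownian meander at time $1$). Consequently $\limsup_{n\to\infty}\mathbb{P}^*_x(X_{\xi_n}\le\delta\sqrt n\mid\min_{k\le n}X_{\xi_k}\ge 0)=\mathbb{P}(M\le\delta)$, so $\limsup_{n\to\infty}T_1(x,\varepsilon,n)\le\mathbb{P}(M\le\delta)$ for every $\delta>0$; letting $\delta\downarrow 0$ and using $\mathbb{P}(M=0)=0$ concludes.

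The only non-elementary ingredient, and the step I expect to require the most care, is precisely the fact that the conditioned endpoint does not accumulate at the origin, i.e.\ that the limiting law has no atom at $0$. If one prefers not to quote meander convergence, the same conclusion follows from a quantitative estimate of the form $\mathbb{P}^*_x(X_{\xi_n}\le a,\ \min_{k\le n}X_{\xi_k}\ge 0)\le C(x)(1+a)^2 n^{-3/2}$ for $a\le\sqrt n$ (obtainable from bounds on ladder renewal measures / the potential kernel as in Appendix~\ref{appGreen}), which together with the lower bound $\mathbb{P}^*_x(\min_{k\le n}X_{\xi_k}\ge 0)\ge c R(x) n^{-1/2}$ from Lemma~\ref{lem:Kozlov} gives directly $T_1(x,\varepsilon,n)\le C'(x)\big(1+\log\tfrac{\sqrt n}{\varepsilon}\big)^2 n^{-1}\to 0$.
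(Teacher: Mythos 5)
Your proposal is correct and follows essentially the same route as the paper: both reduce $T_1$ to the conditional expectation of the bounded quantity $\bigl(\tfrac{\sqrt n}{\varepsilon}e^{-X_{\xi_n}}\bigr)\wedge 1$ and invoke the Iglehart--Bolthausen invariance principle for the walk conditioned to stay non-negative, under which $X_{\xi_n}/\sqrt n$ converges to a strictly positive limit, so that $\sqrt n\, e^{-X_{\xi_n}}\to 0$ in the conditioned law and bounded convergence concludes. Your explicit $\delta$-splitting and the remark that the limiting meander law has no atom at $0$ merely spell out what the paper leaves implicit.
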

\begin{proof}[Proof of Lemma \ref{lem:SmallTerm}]
~\\
By Iglehart \cite{Iglehart1974} and Bolthausen \cite{Bolthausen1976}, we know that $\frac{X_{\xi_{n}}}{\sqrt{n}}$ converges in distribution to a positive random variable under the conditioned probability $\mathbb{P}_{x}^*\left(\cdotp\Big{|}\underset{k\le n}{\min}X_{\xi_k}\ge0\right)$, so $\sqrt{n}e^{-X_{\xi_n}}$ converges in distribution to 0 under the same conditioning. Moreover, the random variables $\left(\frac{\sqrt{n}}{\varepsilon}e^{-X_{\xi_n}}\right)\wedge 1$ are trivially bounded by 1. Hence
\begin{equation}
\mathbb{E}_{x}^*\left[\left(\frac{\sqrt{n}}{\varepsilon}e^{-X_{\xi_n}}\right)\wedge 1\Big{|}\underset{k\le n}{\min}X_{\xi_k}\ge0\right]\underset{n\to\infty}{\longrightarrow}0,
\end{equation}
which gives us that $T_{1}(x,\varepsilon,n)\underset{n\to\infty}{\longrightarrow}0$.
\end{proof}

\begin{lemma}
\label{lem:BigTerm}
For every $\eg>0$, there exists a positive function $\tilde h$,  such that $\tilde h(x)\to 0$ as $x\to\infty$ and such that the following holds: for every $x\ge0$, we have
\begin{equation}
\limsup_{n\to\infty} T_{2}(x,\varepsilon,n)\le \tilde h(x).
\end{equation}
\end{lemma}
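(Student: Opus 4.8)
The plan is to understand the quantity inside the truncation $\cdot\wedge 1$ in $T_2$ under the conditioning $\{\min_{k\le n}X_{\xi_k}\ge 0\}$, and to show it is small with high probability when the starting point $x$ is large. Write
\[
S_n = \frac{\sqrt{n}}{\varepsilon}\sum_{k=0}^{n-1}\sum_{\substack{i\in\mathbb{N}\\\xi_k i\ne\xi_{k+1}}}\mathbb{E}_{X_{\xi_k i}}\big[W'_{n-k-1}\big].
\]
By Proposition~\ref{prop:MomentKozlov}, $\mathbb{E}_{y}[W'_{m}]\le \theta' R(y)e^{-y}/\sqrt{m+1}$, and using Lemma~\ref{lem:BoundR} together with the fact that $R(y)e^{-y}$ is bounded by $c_1(1+y^+)e^{-y}\le C$ (bounded in $y\ge 0$), we get $\mathbb{E}_y[W'_m]\le C'/\sqrt{m+1}$, but more usefully $\mathbb{E}_y[W'_m]\le \theta' R(X_{\xi_k})e^{-X_{\xi_k}}\cdot c_1(1+(y-X_{\xi_k})^+)e^{-(y-X_{\xi_k})}/\sqrt{m+1}$ when $y=X_{\xi_k i}$ with $\xi_k i$ a child of the spine vertex $\xi_k$. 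The point is that the increments $X_{\xi_k i}-X_{\xi_k}$ of the spine's children have a law (under $\mathbb{P}^*$, conditionally on $\GG$, this is governed by the offspring law $\Theta^*$ minus the chosen child) whose relevant moments are controlled by assumptions~\eqref{eq:Wlog2W} and \eqref{eq:DlogD}; in particular, summing $\sum_i (1+(X_{\xi_k i}-X_{\xi_k})^+)e^{-(X_{\xi_k i}-X_{\xi_k})}$ over the children has finite $\mathbb{E}^*[\cdot\mid\GG']$-expectation (where $\GG'$ is the spine sigma-field), and here local branching would make the per-generation contribution exactly one i.i.d.\ term, which is the simplification alluded to in the introduction.

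Next I would decompose the time-sum into a ``bulk'' part $k\le n/2$ and a ``late'' part $k>n/2$. For $k>n/2$, the factor $\sqrt{n}/\sqrt{n-k}\le \sqrt{n}/\sqrt{n-k}$ can be large, so there I would instead use the cruder bound $\mathbb{E}_y[W'_m]\le e^{-y}$ (true since $W'_m\le W_m$ and $\mathbb{E}_y[W_m]=e^{-y}$), giving a contribution bounded by $\frac{\sqrt n}{\varepsilon}\sum_{k>n/2}\sum_i e^{-X_{\xi_k i}}$; under the conditioning the spine walk is at height $\asymp\sqrt k\asymp\sqrt n$, so each $e^{-X_{\xi_k i}}$ is typically $\le e^{-X_{\xi_k}}\cdot(\text{i.i.d.\ factor})$ with $X_{\xi_k}\gtrsim$ large, and this whole block is $o(1)$ in probability regardless of $x$, hence contributes something that $\to 0$ in the $\limsup$ (it is a bounded-by-$1$ quantity tending to $0$ in probability, so dominated convergence closes it — independently of $x$, so it can be absorbed into $\tilde h$). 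For the bulk part $k\le n/2$, one has $\sqrt{n}/\sqrt{n-k}\le \sqrt 2$, so the bulk contribution is at most $\frac{\sqrt 2 \theta' c_1}{\varepsilon}\sum_{k=0}^{n/2} R(X_{\xi_k})e^{-X_{\xi_k}}\cdot \Xi_k$, where $\Xi_k=\sum_{i:\xi_ki\ne\xi_{k+1}}(1+(X_{\xi_ki}-X_{\xi_k})^+)e^{-(X_{\xi_ki}-X_{\xi_k})}$ has conditionally i.i.d.-ish bounded expectation. Now under the conditioning $\{\min_{k\le n}X_{\xi_k}\ge0\}$ and starting from $x$ large, the key is that the spine walk, being a centered finite-variance walk conditioned to stay positive and started high, stays large: $R(X_{\xi_k})e^{-X_{\xi_k}}$ is small whenever $X_{\xi_k}$ is large, which happens with high probability uniformly over $k$ when $x$ is large.

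To make this precise and uniform in $n$, I would invoke the lemma of Kersting--Vatutin advertised before Proposition~\ref{prop:smallo}: it lets one pass from functionals of the walk conditioned to stay above $0$ up to time $n$ to the corresponding functionals of the walk conditioned to stay above $0$ forever (the Doob $R$-transform), along which $X_{\xi_k}\to\infty$ a.s.\ and, when started from $x$, is $\ge x$-ish with overwhelming probability for $x$ large — more quantitatively, $\sum_{k\ge 0} R(X_{\xi_k})e^{-X_{\xi_k}}$ is a.s.\ finite and $\to 0$ in probability as the starting point $x\to\infty$ (its expectation under the $R$-transform can be written via the potential kernel of the killed walk, which is exactly where the explicit potential-kernel formula from Appendix~\ref{appGreen} enters and provides the clean bound decaying in $x$). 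Concretely, I would set
\[
\tilde h(x) = \text{(an upper bound, computed via the potential kernel, for)}\ \ \mathbb{E}^{\uparrow}_x\!\Big[1\wedge \tfrac{C}{\varepsilon}\sum_{k\ge0}R(X_{\xi_k})e^{-X_{\xi_k}}\Xi_k\Big] + (\text{the $x$-free late-block term}),
\]
where $\mathbb{E}^\uparrow_x$ denotes expectation under the walk conditioned to stay nonnegative forever; the truncation at $1$ plus dominated convergence gives $\tilde h(x)\to 0$ as $x\to\infty$, because the sum inside is a.s.\ finite and decreases (in probability) to $0$ as $x\to\infty$ since the walk escapes to $+\infty$. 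The main obstacle is precisely the interchange of limits: controlling $T_2$ for finite $n$ (walk conditioned to stay positive \emph{up to time $n$}) uniformly enough to take $\limsup_n$ and land on the infinite-time object — this is what the Kersting--Vatutin convergence lemma is designed to supply, and verifying its hypotheses (appropriate continuity/boundedness of the functional $\omega\mapsto 1\wedge \frac{C}{\varepsilon}\sum_k R(\omega_k)e^{-\omega_k}\Xi_k$, together with the random-environment pieces $\Xi_k$ which are conditionally independent of the walk given the spine) is the technical heart of the argument. A secondary nuisance is bookkeeping the children-of-the-spine randomness so that the $\Xi_k$ are handled by their conditional expectations and the $L\log L$ assumptions~\eqref{eq:Wlog2W}--\eqref{eq:DlogD} are used exactly once, at the right place, to guarantee $\mathbb{E}^*[\Xi_k\mid\text{spine}]<\infty$ with the correct integrability against $R(X_{\xi_k})e^{-X_{\xi_k}}$.
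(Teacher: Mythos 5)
Your plan follows essentially the same route as the paper's proof: the same reduction via Proposition~\ref{prop:MomentKozlov} and Lemma~\ref{lem:BoundR} to terms of the form $R(X_{\xi_k})e^{-X_{\xi_k}}V_k$ with $V_k$ the weighted sum over the spine's siblings, the same bulk/late split at $k=\lfloor n/2\rfloor$ (with the late block killed by the fact that the conditioned walk sits at height of order $\sqrt n$ there), the Kersting--Vatutin lemma to replace the finite-horizon conditioning by the law $\mathbb{P}_x^+$, and the potential-kernel estimate of Appendix~\ref{appGreen} (Lemma~\ref{lem:GreenEstimate}) to get the decay in $x$. The one step you gloss over is the technical heart: under $\mathbb{P}_x^+$ the offspring law is $R$-biased, so the untruncated conditional expectation of $V_k$ is in general \emph{not} finite (it would require second-moment conditions on $W_1$); one must keep the truncation $\wedge 1$ term by term inside the $k$-sum, and it is precisely the Tauberian lemma (Lemma~\ref{lem:tauberian}) applied to the truncated first moment $f(y)=\mathbb{E}\bigl[\bigl(W_1+\tfrac{Z_1}{R(y)}\bigr)\bigl(\varepsilon^{-1}e^{-y/4}(W_1+Z_1)\wedge 1\bigr)\bigr]$ that converts \eqref{eq:Wlog2W}--\eqref{eq:DlogD} into the integrability $\int_0^\infty y\,f(y)\,dy<\infty$ needed for Lemma~\ref{lem:GreenEstimate}.
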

The proof of Lemma~\ref{lem:BigTerm} is delayed to the next section.

We can now finish the proof of Proposition~\ref{prop:smallo}.
\begin{proof}[Proof of Proposition~\ref{prop:smallo}]
~\\
Applying Lemmas \ref{lem:SmallTerm} and \ref{lem:BigTerm} to Equation \eqref{eq:T1plusT2}, we get that for every $\eg>0$ and $x\ge0$,
\begin{equation*}
\limsup_{n\to\infty} \mathbb{E}_{x}\left[\sqrt{n}W_{n}'\boldsymbol{1}_{\frac{\sqrt{n}W_{n}'}{\varepsilon}\ge 1}\right]\le\theta' R(x)\tilde h(x)e^{-x},
\end{equation*}
which implies the proposition.
\end{proof}

\section{Proof of Lemma \ref{lem:BigTerm}}
\label{secGrosTerme}

In order to prove Lemma \ref{lem:BigTerm}, we introduce in Lemma~\ref{lem:KV} a result on convergence of functionals of the branching random walk conditioned on the spine staying above the origin until time $n$ to corresponding functionals of the process conditioned on the spine staying above the origin for all time. It is inspired by a analogous result for random walks by Kersting and Vatutin \cite[Lemma~5.2]{Kersting2017}.
We recall that $R$ is harmonic for the sub-Markov process obtained by killing $(X_{\xi_{n}})_{n\ge0}$ when entering $(-\infty,0)$ (Proposition~\ref{prop:Rharmonic}). 
For $x\ge0$, define the probability measure $\mathbb{P}^{+}_{x}$ by
\begin{equation}\label{eq:defP+}
\frac{d\mathbb{P}_x^+}{d\mathbb{P}_x^*}\Big|_{\mathcal \FF_n} = \frac{1}{R(x)} R(X_{\xi_n}) \boldsymbol 1_{\min_{k\le n}X_{\xi_{k}}\ge0},
\end{equation}
and denote the associated expectation by $\mathbb{E}_{x}^{+}$. Heuristically, under $\mathbb{P}_{x}^+$, the motion of the spine is conditioned to stay non-negative at all times.

\begin{lemma}
\label{lem:KV}
With the above definitions, let $(Y_n)_{n\ge0}$ be a uniformly bounded sequence of random variables, adapted to $(\FF_n)_{n\ge0}$. Let $x\ge0$.
If $Y_\infty$ is a random variable such that $Y_n\underset{n\to\infty}{\longrightarrow}Y_{\infty}$ in probability under $\mathbb{P}_{x}^{+}$, then 
\begin{equation}
\lim_{n\to\infty}\mathbb{E}_x^{*}\left[Y_{n}\big{|}\min_{k\le n}X_{\xi_{k}}\ge 0\right]=\mathbb{E}_x^{+}\left[Y_{\infty}\right].
\end{equation}
\end{lemma}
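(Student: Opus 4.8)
The plan is to deduce Lemma~\ref{lem:KV} from the analogous random-walk statement of Kersting and Vatutin \cite[Lemma~5.2]{Kersting2017} by reducing to it via a truncation argument. The key observation is that $\mathbb{P}^+_x$ is the Doob $R$-transform of the random walk $(X_{\xi_n})$ killed below the origin, and that the renewal function $R$ being harmonic (Proposition~\ref{prop:Rharmonic}) is exactly what makes \eqref{eq:defP+} a consistent family of densities and hence $\mathbb{P}^+_x$ a well-defined probability measure on $\FF_\infty$. One then wants to rewrite the conditioned expectation $\mathbb{E}^*_x[Y_n\mid \min_{k\le n}X_{\xi_k}\ge0]$ in terms of $\mathbb{P}^+_x$. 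Explicitly, for $m\le n$ and $Y_m$ being $\FF_m$-measurable,
\begin{equation*}
\mathbb{E}^*_x\!\left[Y_m\boldsymbol 1_{\min_{k\le n}X_{\xi_k}\ge0}\right]
= \mathbb{E}^*_x\!\left[Y_m\boldsymbol 1_{\min_{k\le m}X_{\xi_k}\ge0}\,\phi_{n-m}(X_{\xi_m})\right]
= R(x)\,\mathbb{E}^+_x\!\left[Y_m\,\frac{\phi_{n-m}(X_{\xi_m})}{R(X_{\xi_m})}\right],
\end{equation*}
where $\phi_j(y)=\mathbb{P}^*_y(\min_{k\le j}X_{\xi_k}\ge0)$; dividing by $\mathbb{P}^*_x(\min_{k\le n}X_{\xi_k}\ge0)=R(x)\,\mathbb{E}^+_x[\phi_0(X_{\xi_n})/R(X_{\xi_n})]$ turns the conditioned expectation into a ratio of $\mathbb{P}^+_x$-expectations. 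By Lemma~\ref{lem:Kozlov}, $\sqrt{n}\,\phi_n(y)/R(y)\to\theta$ and is bounded by $\theta'$ uniformly, so $\phi_{n-m}(X_{\xi_m})/R(X_{\xi_m})$, multiplied by $\sqrt{n}$, converges $\mathbb{P}^+_x$-a.s.\ to the constant $\theta$ as $n\to\infty$ for each fixed $m$; the same holds for the normalizing denominator.

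Concretely, I would proceed as follows. First, fix $m$ and replace $Y_n$ by $Y_m$ at the cost of an error that is small uniformly in $n$: since $Y_n\to Y_\infty$ in $\mathbb{P}^+_x$-probability and the $Y_n$ are uniformly bounded, convergence also holds in $L^1(\mathbb{P}^+_x)$, so one can choose $m$ so that $\mathbb{E}^+_x[|Y_n-Y_m|]$ is small for all $n\ge m$ large; transporting this back through the density \eqref{eq:defP+} and the uniform bound \eqref{eq:Koz2} of Lemma~\ref{lem:Kozlov} shows $|\mathbb{E}^*_x[Y_n\mid\min_{k\le n}X_{\xi_k}\ge0]-\mathbb{E}^*_x[Y_m\mid\min_{k\le n}X_{\xi_k}\ge0]|$ is correspondingly small, uniformly in $n\ge m$. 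Second, with $m$ fixed, take $n\to\infty$ in the ratio formula above: the integrand $\sqrt{n}\,\phi_{n-m}(X_{\xi_m})/R(X_{\xi_m})$ converges $\mathbb{P}^+_x$-a.s.\ to $\theta$ and is dominated by $\theta'$ (using \eqref{eq:Koz2} and that $R$ is bounded below by a positive constant on the support of $X_{\xi_m}$ under $\mathbb{P}^+_x$, since $R\ge1$ wherever it does not vanish — actually $R(y)\ge R(0)=1$ for $y\ge0$ by monotonicity of the renewal function), so dominated convergence gives $\sqrt{n}\,\mathbb{E}^*_x[Y_m\boldsymbol 1_{\min_{k\le n}X_{\xi_k}\ge0}]\to \theta R(x)\,\mathbb{E}^+_x[Y_m]$; similarly the denominator $\sqrt n\,\mathbb{P}^*_x(\min_{k\le n}X_{\xi_k}\ge0)\to\theta R(x)$. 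Hence $\mathbb{E}^*_x[Y_m\mid\min_{k\le n}X_{\xi_k}\ge0]\to\mathbb{E}^+_x[Y_m]$. Third, let $m\to\infty$: the right-hand side $\mathbb{E}^+_x[Y_m]\to\mathbb{E}^+_x[Y_\infty]$ by $L^1(\mathbb{P}^+_x)$-convergence, and combining with the uniform-in-$n$ error bound from the first step yields the claim.

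The main obstacle is managing the interchange of the limits in $n$ and $m$ carefully — one needs the uniform-in-$n$ control of the tail $\mathbb{E}^*_x[|Y_n-Y_m|\mid\min_{k\le n}X_{\xi_k}\ge0]$, which is where the uniform upper bound \eqref{eq:Koz2} in Lemma~\ref{lem:Kozlov} (the ``$\le\theta' R(x)/\sqrt n$'' half) is essential rather than just the asymptotic \eqref{eq:Koz1}. A secondary point requiring a little care is ensuring the ratio $\phi_{n-m}(X_{\xi_m})/R(X_{\xi_m})$ is genuinely dominated under $\mathbb{P}^+_x$: this uses $R\ge1$ on $[0,\infty)$ together with \eqref{eq:Koz2}. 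Everything else — the change-of-measure identities, the Doob-transform bookkeeping, the dominated-convergence steps — is routine once the reduction to fixed $m$ is in place. Alternatively, one can invoke \cite[Lemma~5.2]{Kersting2017} essentially verbatim, since the functional $Y_n$ of the spinal BRW, once we condition on the spine trajectory, only enters through its behaviour as a bounded adapted sequence converging under the $h$-transform $\mathbb{P}^+_x$; the branching structure plays no role here beyond defining $\FF_n$, so the random-walk lemma applies directly with $(X_{\xi_n})$ in place of the generic random walk.
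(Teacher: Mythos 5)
Your second step (fixed $m$, let $n\to\infty$) is sound and coincides with the paper's Equation~\eqref{eq:proofLemKV}: for a fixed $\FF_m$-measurable variable, the density of the conditioning on $\FF_m$ is $m_{n-m}(X_{\xi_m})/m_n(x)$ with $m_j(y)=\mathbb{P}^*_y(\min_{k\le j}X_{\xi_k}\ge0)$, and Lemma~\ref{lem:Kozlov} gives both the limit $R(X_{\xi_m})/R(x)$ and a domination, so dominated convergence applies. The gap is in your first step. You claim that $\mathbb{E}^*_x\bigl[|Y_n-Y_m|\,\big|\,\min_{k\le n}X_{\xi_k}\ge0\bigr]$ is controlled by a constant times $\mathbb{E}^+_x[|Y_n-Y_m|]$, ``transporting through the density and \eqref{eq:Koz2}''. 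But $|Y_n-Y_m|$ is $\FF_n$-measurable, so the relevant comparison is between the two measures restricted to $\FF_n$: the conditioned law has density $\boldsymbol 1_{\min_{k\le n}X_{\xi_k}\ge0}/m_n(x)$ with respect to $\mathbb{P}^*_x$, while $\mathbb{P}^+_x$ has density $R(X_{\xi_n})\boldsymbol 1_{\min_{k\le n}X_{\xi_k}\ge0}/R(x)$. Their ratio is $R(x)/(m_n(x)R(X_{\xi_n}))$, which on the event $\{X_{\xi_n}=O(1)\}$ is of order $\sqrt n$ and hence not uniformly bounded; taking e.g.\ $|Y_n-Y_m|=\boldsymbol 1_{X_{\xi_n}\in[0,1]}$ shows the inequality
\begin{equation*}
\mathbb{E}^*_x\bigl[Z\,\big|\,\min_{k\le n}X_{\xi_k}\ge0\bigr]\le C\,\mathbb{E}^+_x[Z]
\end{equation*}
fails for $\FF_n$-measurable $Z\ge0$ with any constant $C$ uniform in $n$. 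So the uniform-in-$n$ error bound that your three-step interchange of limits hinges on is not established, and \eqref{eq:Koz2} alone cannot deliver it, precisely because the ``future'' part of the conditioning is empty at time $n$.

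The paper's proof resolves exactly this point with an extra device: it first conditions on $\{\min_{k\le \lfloor an\rfloor}X_{\xi_k}\ge0\}$ for a fixed $a>1$. Then the density on $\FF_n$ is $m_{\lfloor an\rfloor-n}(X_{\xi_n})/m_{\lfloor an\rfloor}(x)$, which by \eqref{eq:Koz1}--\eqref{eq:Koz2} \emph{is} dominated by $K\sqrt{a/(a-1)}\,R(X_{\xi_n})/R(x)$ uniformly in $n$; this yields the uniform bound $K\sqrt{a/(a-1)}\,\mathbb{E}^+_x[|Y_n-Y_l|]$ and hence the limit \eqref{eq:kv1} along the modified conditioning. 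One then pays a price $2M(1-m_{\lfloor an\rfloor}(x)/m_n(x))\to 2M(1-1/\sqrt a)$ to return to the conditioning on $\{\min_{k\le n}X_{\xi_k}\ge0\}$, and lets $a\to1$ at the very end. You would need to incorporate this (or an equivalent) step; as written, your argument does not close.
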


\begin{proof}[Proof of Lemma~\ref{lem:KV}]
~\\
We give the proof for completeness, which follows almost exactly along the lines of Lemma~5.2 in Kersting and Vatutin \cite{Kersting2017}. Throughout the proof, fix $x\ge0$. 

Define 
\begin{equation*}
m_n(x)=\mathbb{P}_{x}^{*}\left(\min_{0\le k\le n}X_{\xi_k}\ge0\right).
\end{equation*} 
Then, for every $l\le n$, conditioning on $\mathcal{F}_{l}$ gives 
\begin{align*}
\mathbb{E}_{x}^{*}\left[Y_{l}\Big{|}\min_{k\le n}X_{\xi_k}\ge0\right]&=\mathbb{E}_{x}^{*}\left[Y_{l}\frac{1}{m_{n}(x)}\boldsymbol 1_{\min_{k\le n}X_{\xi_{k}}\ge0}\right]\\
&=\mathbb{E}_{x}^{*}\left[Y_{l}\frac{m_{n-l}(X_{\xi_l})}{m_{n}(x)}\boldsymbol 1_{\min_{k\le l}X_{\xi_{k}}\ge0}\right].
\end{align*}
Now fix $l\in\mathbb{N}$ for the moment. By Lemma~\ref{lem:Kozlov}, the ratio $m_{n-l}(X_{\xi_l})/m_{n}(x)$ converges to $R(X_{\xi_l})/R(x)$ as $n\to\infty$ and is bounded by a constant multiple of this quantity. By dominated convergence, we get
\begin{align}
\lim_{n\to\infty}\mathbb{E}_{x}^{*}\left[Y_{l}\Big{|}\min_{k\le n}X_{\xi_k}\ge0\right]&=\mathbb{E}_{x}^{*}\left[Y_{l}\lim_{n\to\infty}\frac{m_{n-l}(X_{\xi_l})}{m_{n}(x)}\boldsymbol 1_{\min_{k\le l}X_{\xi_{k}}\ge0}\right]\nonumber\\
&=\mathbb{E}_{x}^{*}\left[Y_{l}\frac{R(X_{\xi_l})}{R(x)}\boldsymbol 1_{\min_{k\le l}X_{\xi_{k}}\ge0}\right]\nonumber\\
&=\mathbb{E}_{x}^{+}[Y_l],\label{eq:proofLemKV}
\end{align}
where the last equality follows from the definition of $\mathbb{P}_{x}^{+}$ in Equation~\eqref{eq:defP+}.

Now let $a>1$. Using again Lemma~\ref{lem:Kozlov}, we get that for some constant $K$, for every $1\le l\le n$,
\begin{align}
\Bigg{|}\mathbb{E}_{x}^{*}\left[Y_{n}-Y_{l}\Big{|}\min_{k\le \lfloor an\rfloor}X_{\xi_k}\ge0\right]\Bigg{|}&\le\mathbb{E}_{x}^{*}\left[|Y_n-Y_l|\frac{m_{\lfloor (a-1)n\rfloor}(X_{\xi_n})}{m_{\lfloor an\rfloor}(x)}\boldsymbol 1_{\min_{k\le n}X_{\xi_{k}}\ge0}\right]\nonumber\\
&\le K\sqrt{\frac{a}{a-1}}\mathbb{E}_{x}^{*}\left[|Y_n-Y_l|\frac{R(X_{\xi_n})}{R(x)}\boldsymbol 1_{\min_{k\le n}X_{\xi_{k}}\ge0}\right]\nonumber\\
&= K\sqrt{\frac{a}{a-1}}\mathbb{E}_{x}^{+}\left[|Y_n-Y_l|\right].\label{eq:proofLemKV2}
\end{align}
Now recall that by assumption, $(Y_n)_{n\ge0}$ is uniformly bounded and converges to a limit $Y_\infty$ in probability, under $\mathbb{P}_x^+$. Hence, the following holds:
\begin{itemize}
\item $\mathbb{E}_x^+[Y_l] \to \mathbb{E}_x^+[Y_\infty]$ as $l\to\infty$, and
\item $\mathbb{E}_{x}^{+}[|Y_n-Y_l|] \to 0$, as $n$ then $l$ tends to infinity.
\end{itemize}
Combining these two facts with \eqref{eq:proofLemKV} and  \eqref{eq:proofLemKV2} and letting first $n$ then $l$ go to infinity, we obtain
\begin{equation}
\label{eq:kv1}
\lim_{n\to\infty} \mathbb{E}_{x}^{*}\left[Y_{n}\Big{|}\min_{k\le \lfloor an\rfloor}X_{\xi_k}\ge0\right] = \mathbb{E}_{x}^{+}[Y_\infty].
\end{equation}

Now set $M = \sup_n \|Y_n\|_\infty$ and note that $|Y_\infty| \le M$ almost surely. Then,
\begin{align*}
&\left|\mathbb{E}_{x}^{*}\left[Y_{n}\boldsymbol 1_{\min_{k\le n}X_{\xi_k}\ge0}\right]-\mathbb{E}_{x}^{+}[Y_\infty]m_n(x)\right|\\
&\le\left|\mathbb{E}_{x}^{*}\left[Y_n \boldsymbol 1_{\min_{k\le \lfloor an\rfloor}X_{\xi_k}\ge0}\right]-\mathbb{E}_{x}^{+}[Y_\infty]m_{\lfloor an\rfloor}(x)\right|+2M\left(m_n(x) - m_{\lfloor an\rfloor}(x)\right).
\end{align*}
and dividing by $m_n(x)$, we get
\begin{align*}
&\left|\mathbb{E}_{x}^{*}\left[Y_{n}\Big|\min_{k\le n}X_{\xi_k}\ge0\right]-\mathbb{E}_{x}^{+}[Y_\infty]\right|\\
&\le\frac{m_{\lfloor an\rfloor}(x)}{m_n(x)}\left|\mathbb{E}_{x}^{*}\left[Y_n\Big|\min_{k\le \lfloor an\rfloor}X_{\xi_k}\ge0\right]-\mathbb{E}_{x}^{+}[Y_\infty]\right|+2M\left(1 - \frac{m_{\lfloor an\rfloor}(x)}{m_n(x)}\right).
\end{align*}
Note that $m_{\lfloor an\rfloor}(x)/m_n(x) \to 1/\sqrt a$ as $n\to\infty$ by Lemma~\ref{lem:Kozlov}. Hence, using \eqref{eq:kv1}, we get
\[
\limsup_{n\to\infty} \left|\mathbb{E}_{x}^{*}\left[Y_{n}\Big|\min_{k\le n}X_{\xi_k}\ge0\right]-\mathbb{E}_{x}^{+}[Y_\infty]\right| \le 2M\left(1 - \frac 1 {\sqrt{a}}\right).
\]
Letting $a\to 1$ gives
\[
\lim_{n\to\infty} \mathbb{E}_{x}^{*}\left[Y_{n}\Big|\min_{k\le n}X_{\xi_k}\ge0\right] = \mathbb{E}_{x}^{+}[Y_\infty],
\]
which was to be proven.
\end{proof}

We will furthermore need the following estimate on the potential kernel of the spine under the law $\mathbb{P}_x^+$:

\begin{lemma}\label{lem:GreenEstimate}
 Let $f:\mathbb{R}_+\to\mathbb{R}_+$ be a bounded, non-increasing function satisfying $\int_0^\infty yf(y)\,dy < \infty$. Then
 \[
  \mathbb{E}_x^+\left[\sum_{k=0}^\infty f(X_{\xi_k}) \right]\to 0,\quad \text{as $x\to\infty$}.
 \]
 Furthermore, the above expectation is finite for every $x\ge0$.
\end{lemma}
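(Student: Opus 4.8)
The plan is to reduce the statement to an estimate on the Green function (potential kernel) of the spine random walk $(X_{\xi_k})_{k\ge0}$ killed below the origin, and then to use the explicit expression for this kernel provided in Appendix~\ref{appGreen}. First I would rewrite the $\mathbb{P}_x^+$-expectation in terms of the killed walk under $\mathbb{P}_x^*$: since $d\mathbb{P}_x^+/d\mathbb{P}_x^*|_{\FF_n} = R(X_{\xi_n})R(x)^{-1}\boldsymbol 1_{\min_{k\le n}X_{\xi_k}\ge0}$ and $R$ is harmonic for the killed walk (Proposition~\ref{prop:Rharmonic}), one has by monotone convergence
\begin{equation*}
\mathbb{E}_x^+\left[\sum_{k=0}^\infty f(X_{\xi_k})\right] = \frac{1}{R(x)}\sum_{k=0}^\infty \mathbb{E}_x^*\left[f(X_{\xi_k})R(X_{\xi_k})\boldsymbol 1_{\min_{j\le k}X_{\xi_j}\ge0}\right] = \frac{1}{R(x)}\sum_{y\ge0} G(x,y)f(y)R(y),
\end{equation*}
where $G(x,y)=\sum_{k\ge0}\mathbb{P}_x^*(X_{\xi_k}\in dy,\ \min_{j\le k}X_{\xi_j}\ge0)$ is the potential kernel of the spine walk killed on entering $(-\infty,0)$ (I write the sum loosely; in the lattice case it is a genuine sum, in the non-lattice case an integral against the potential measure).

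Next I would invoke the formula from Appendix~\ref{appGreen} expressing $G(x,dy)$ in terms of the renewal measures of the strictly ascending and descending ladder heights of $(X_{\xi_k})$. The key qualitative consequences I need are: (i) $G(x,dy)$ has a density (or mass function) bounded by a constant uniformly in $x$ and $y$ — this follows because $G(x,dy)$ is a convolution of two renewal measures with increments of finite mean, hence each has bounded renewal density by the renewal theorem; and (ii) more precisely, $\int_A G(x,dy)\,R(y)$ is controlled: combining the bound $R(y)\le c_1(1+y^+)$ from \eqref{eq:defc1} with the structure of $G$, one gets $\sum_{y\ge0}G(x,y)R(y)f(y) \le C\sum_{y\ge0}(1+y)f(y) < \infty$ by the hypothesis $\int_0^\infty yf(y)\,dy<\infty$ and monotonicity of $f$ (which lets us compare the sum to the integral). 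This already gives finiteness for every fixed $x\ge0$ once we divide by $R(x)>0$.

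For the convergence to $0$ as $x\to\infty$, the point is that mass of $G(x,\cdot)$ escapes to infinity: as $x\to\infty$, the killed walk started from $x$ spends, in any fixed window $[0,N]$, an expected time $G(x,[0,N])$ that stays bounded (again by boundedness of the renewal density) while $R(x)\to\infty$ like $x$ by \eqref{eq:Rlimit}. Concretely I would split $\sum_{y\ge0}G(x,y)R(y)f(y)$ at a level $N$. On $\{y\le N\}$ we bound $R(y)\le R(N)$ and $G(x,[0,N])\le C N$ (uniform in $x$), so this piece is $\le C N R(N)\sup_{y\le N}f(y)$, which divided by $R(x)\to\infty$ tends to $0$ for each fixed $N$. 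On $\{y>N\}$ we use the uniform bound on the renewal density of $G$ together with $R(y)\le c_1(1+y)$ to get this piece $\le C'\sum_{y>N}(1+y)f(y) \le C''\int_N^\infty yf(y)\,dy$ (using that $f$ is non-increasing), uniformly in $x$, and divided by $R(x)\ge R(0)=1$ this is $\le C''\int_N^\infty yf(y)\,dy$, which is the tail of a convergent integral and hence small for $N$ large. Taking $\limsup_{x\to\infty}$ then $N\to\infty$ concludes.

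The main obstacle I anticipate is item (i)/(ii): getting a clean, uniform-in-$x$ bound on $G(x,dy)$ of the form ``bounded density times $(1+y^+)$-type growth integrated against $R$'' directly from the ladder-renewal-measure formula of Appendix~\ref{appGreen}. This requires knowing that the descending-ladder renewal measure, against which $x$ enters, has a uniformly bounded renewal density (true here because the ladder heights have finite mean and — in the non-lattice case — one should be slightly careful and perhaps work with $G(x,[y,y+1])$ rather than a pointwise density), and combining it correctly with the ascending-ladder renewal function whose growth is exactly governed by $R$ via the identity $R(x)=\sum_k \mathbb{P}^*(\ldots)$ relating $R$ to the ascending ladder structure. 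Modulo this bookkeeping with the two renewal measures, the argument is the elementary split-at-$N$ dominated-convergence estimate sketched above.
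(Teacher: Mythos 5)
Your reduction of the statement to the potential kernel of the killed spine walk, and the use of the Appendix formula expressing that kernel through the two ladder renewal measures, is exactly the paper's starting point, and the split-at-level-$N$ scheme is in the right spirit. However, your claim (i) is false, and it is the load-bearing step. The potential kernel of a centered, finite-variance walk killed below the origin does \emph{not} have a uniformly bounded density: from Theorem~\ref{th:green}, for a unit window one gets
\[
G\bigl(x,[w,w+1]\bigr)\;\le\; c^=\,\Bigl(\sup_{t\ge0}\hat\mu([t,t+1])\Bigr)\,\mu\bigl([x-w-1,x]\cap[0,x]\bigr)\;\asymp\;1+\min(w,x),
\]
which grows linearly in the height $w$ (for simple random walk killed at $-1$ one has exactly $G(x,y)\asymp 1+\min(x,y)$; the linear growth mirrors that of the harmonic function $R$). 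A convolution of two renewal measures with bounded unit increments does not itself have bounded unit increments -- its distribution function grows quadratically. With the correct bound, your tail piece becomes $\sum_{y>N}G(x,y)R(y)f(y)\lesssim\sum_{y>N}(1+y)^2f(y)$, which requires $\int_0^\infty y^2f(y)\,dy<\infty$, strictly stronger than the hypothesis $\int_0^\infty yf(y)\,dy<\infty$. So as written the argument does not close.

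The gap is repairable in two ways. Staying with your split, use instead the other bound $G(x,[w,w+1])\le c^= C\,\mu([0,x])=CR(x)$ on the tail region: then the tail piece divided by $R(x)$ is at most $C\sum_{w>N}(1+w)f(w)$, which is small for large $N$ by the hypothesis and monotonicity of $f$, while the head piece $y\le N$ is bounded uniformly in $x$ for fixed $N$ and is killed by dividing by $R(x)\to\infty$. The paper proceeds differently: it bounds $R(x-y+z)f(x-y+z)\le C\tilde f(x-y+z)$ with $\tilde f(w)=(1+w)f(w)$, integrates first in $z$ against $\hat\mu$ (whose unit increments are bounded by Feller's renewal theorem) to obtain a bounded function $g(w)=\int\tilde f(w+z)\,\hat\mu(dz)$ with $g(w)\to0$ precisely under $\int yf(y)\,dy<\infty$, and then integrates $g(x-y)$ against $\mu(dy)$ on $[0,x]$, splitting at the level where $g\le\delta$ (contributing $\delta R(x)$) and controlling the boundary window $[x-y_0,x]$ by the key renewal theorem (contributing $O(1)$). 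Either route keeps track of which renewal measure absorbs the linear growth, which is the point your claim (i) elides.
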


\begin{proof}[Proof of Lemma \ref{lem:GreenEstimate}]
~\\
By the definition of the law $\mathbb{P}_x^+$, we have
\begin{align}
\mathbb{E}_{x}^{+}\left[\sum_{k=0}^{\infty}f(X_{\xi_k})\right] = \frac{1}{R(x)}\mathbb{E}_{x}^{*}\left[\sum_{k=0}^{\infty}R(X_{\xi_k})f(X_{\xi_k})\boldsymbol 1_{\forall l\le k,X_{\xi_l}\ge 0}\right].\label{eq:sumf}
\end{align}
Let $\mu$ and $\hat \mu$ be the renewal measures associated to the (absolute values of the) strictly descending and strictly ascending ladder heights of the random walk $(X_{\xi_n})_{n\ge0}$, respectively, see Appendix~\ref{appGreen}. Since the random walk has finite variance by \eqref{eq:MomentsSpine}, the ladder heights have finite expectation, see e.g. Rogozin~\cite{Rogozin1964}. Recall that $R(x) = \mu([0,x])$ and define $\hat R(x) = \hat \mu([0,x])$. Also recall the constant $c^=$ from Appendix~\ref{appGreen}. Using Theorem~\ref{th:green} we get
\begin{align}
\mathbb{E}_{x}^{*}\left[\sum_{k=0}^{\infty}R(X_{\xi_k})f(X_{\xi_k})\boldsymbol 1_{\forall l\le k,X_{\xi_l}\ge 0}\right]&=c^=\int_{z=0}^{\infty}\int_{y=0}^{x}R(x-y+z)f(x-y+z)\mu(dy)\hat{\mu}(dz)\nonumber\\
&=:I(x).\label{eq:I0}
\end{align}

Define the function $\tilde{f}:y\mapsto (1+y)f(y)$, $y\ge0$. By \eqref{eq:defc1}, there exists $C\in(0,\infty)$, such that
\begin{equation}
\label{eq:I}
I(x)\le C\int_{z=0}^{\infty}\int_{y=0}^{x}\tilde{f}(x-y+z)\mu(dy)\hat{\mu}(dz).
\end{equation}
In order to bound this double integral, we first integrate over $z$, then over $y$. For $w\ge 0$, put $g(w)=\int_{0}^{\infty}\tilde{f}(w+z)\hat{\mu}(dz)$. Then \eqref{eq:I} implies
\begin{equation}
 \label{eq:I2}
 I(x)\le C\int_{0}^{x}g(x-y)\mu(dy).
\end{equation}

We first bound $g$. For simplicity, suppose $\hat\mu$ is non-arithmetic or that its span is equal to 1, the general case can be treated by a scaling argument. We then have for every $w\ge 0$, since $f$ is non-increasing,
\begin{align*}
g(w)&=\sum_{k=0}^{\infty}\int_{k}^{k+1}\tilde{f}(w+z)\hat{\mu}(dz)\\
&\le \sum_{k=0}^{\infty}(w+k+2)f(w+k)\left(\hat{R}(k+1)-\hat{R}(k)\right).
\end{align*}
By Feller's renewal theorem, $\hat{R}(k+1)-\hat{R}(k)$ is uniformly bounded in $k$, hence,
\begin{align*}
 g(w) &\le C\sum_{k=0}^{\infty}(w+k+2)f(w+k)\\
 &\le \int_{w-1}^\infty (z+3)f(z\vee 0)\,dz,
\end{align*}
using again that $f$ is non-increasing. The hypotheses on $f$ now readily imply that $g$ is bounded and
\begin{align}
\label{eq:gzero}
 g(w) \to 0,\quad w\to\infty.
\end{align} 

By \eqref{eq:sumf}, \eqref{eq:I0} and \eqref{eq:I2}, it remains to show that
\begin{equation}
 \label{eq:g}
 \frac 1 {R(x)} \int_{0}^{x}g(x-y)\mu(dy) \to 0,\quad x\to\infty.
\end{equation}
Let $\dg>0$. By \eqref{eq:gzero}, there exists $y_0\ge 0$ such that $\forall y\ge y_0, g(y)\le \dg$. Then,
\begin{align*}
\int_{0}^{x}g(x-y)\mu(dy) &\le \dg \mu([0,x])+\int_{x-y_0}^{x}g(x-y)\mu(dy).
\end{align*}
The first term on the right-hand side equals $\delta R(x)$ by definition and the second term converges to a constant as $x\to\infty$, by the key renewal theorem (see Feller \cite[p.~363]{Feller1971}). As a consequence,
\[
 \limsup_{x\to\infty} \frac 1 {R(x)} \int_{0}^{x}g(x-y)\mu(dy) \le \delta.
\]
Since $\delta$ was arbitrary, this proves \eqref{eq:g} and thus finishes the proof.
\end{proof}

We now have all the tools we need to prove the main lemma of this section.

\begin{proof}[Proof of Lemma \ref{lem:BigTerm}]
~\\
Throughout the proof, we fix $\eg>0$.

As mentioned above, our goal is to apply Lemma \ref{lem:KV} to a suitable sequence of random variables $(Y_{n})_{n\ge0}$. Recall that
\begin{equation*}
T_{2}(x,\varepsilon,n)=\mathbb{E}_{x}^{*}\left[\left(\frac{\sqrt{n}}{\varepsilon}\sum_{k=0}^{n-1}\sum_{\substack{i\in\mathbb{N}\\\xi_{k}i\ne\xi_{k+1}}}\mathbb{E}_{X_{\xi_{k}i}}\left[W_{n-k-1}'\right]\right)\wedge 1\Big{|}\underset{k\le n}{\min}X_{\xi_k}\ge 0\right].
\end{equation*}
Using Proposition \ref{prop:MomentKozlov}, we get the following bound:
\begin{equation}\label{eq:boundfracsqrt}
\frac{\sqrt{n}}{\varepsilon}\sum_{k=0}^{n-1}\sum_{\substack{i\in\mathbb{N}\\\xi_{k}i\ne\xi_{k+1}}}\mathbb{E}_{X_{\xi_{k}i}}\left[W_{n-k-1}'\right]\le\frac{\theta'}{\varepsilon}\sum_{k=0}^{n-1}\sqrt{\frac{n}{n-k}}\sum_{\substack{i\in\mathbb{N}\\\xi_{k}i\ne\xi_{k+1}}}R(X_{\xi_{k}i})e^{-X_{\xi_{k}i}},
\end{equation}
with $\theta'$ the constant from Proposition~\ref{prop:MomentKozlov}.

In order to bound the contribution of the children of the spine to the right-hand side of Equation~\eqref{eq:boundfracsqrt}, define a sequence of random variables for $k\ge0$:
\begin{equation}\label{eq:defVk}
V_{k}=\sum_{\substack{i\in\mathbb{N}\\\xi_{k}i\ne\xi_{k+1}}}\left(1+(X_{\xi_{k}i}-X_{\xi_{k}})^+\right)e^{-(X_{\xi_{k}i}-X_{\xi_k})}.
\end{equation}

Using Lemma \ref{lem:BoundR}, we have for every $k\ge0$:
\begin{align*}
\sum_{\substack{i\in\mathbb{N}\\\xi_{k}i\ne\xi_{k+1}}}R(X_{\xi_{k}i})e^{-X_{\xi_{k}i}}
&=\sum_{\substack{i\in\mathbb{N}\\\xi_{k}i\ne\xi_{k+1}}}R\left(X_{\xi_{k}}+(X_{\xi_{k}i}-X_{\xi_{k}})\right)e^{-X_{\xi_{k}}-(X_{\xi_{k}i}-X_{\xi_k})}\\
&\le c_1 \left(1+X_{\xi_{k}}^{+}\right)e^{-X_{\xi_k}}\sum_{\substack{i\in\mathbb{N}\\\xi_{k}i\ne\xi_{k+1}}}\left(1+(X_{\xi_{k}i}-X_{\xi_{k}})^+\right)e^{-(X_{\xi_{k}i}-X_{\xi_k})},\\
&\le c_1 \left(1+X_{\xi_{k}}^{+}\right)e^{-X_{\xi_k}}V_{k},\\
&\le C e^{-X_{\xi_k}/2}V_k,
\end{align*}
where $C>0$ is such that $c_{1}(1+x^+)e^{-x}\le C e^{-x/2}$ for all $x\ge0$.

Put $Y_n=\left(\sum_{k=0}^{n-1}\left(\sqrt{\frac{n}{n-k}}\frac{e^{-X_{\xi_{k}}/2}V_{k}}{\varepsilon}\wedge 1\right)\right)\wedge 1$.
Using the previous inequalities and equivalents, and plugging it in the expression of $T_{2}(x,\varepsilon,n)$, we obtain :
\begin{equation}\label{eq:boundingT2}
T_{2}(x,\varepsilon,n)\le \theta' C \mathbb{E}_{x}^{*}\left[Y_{n}\Big{|}\underset{k\le n}{\min}X_{\xi_{k}}\ge0\right].
\end{equation}
In order to bound the expectation on the right-hand side of \eqref{eq:boundingT2}, we first bound $Y_n$ by
\begin{equation}
 \label{eq:YnDecomposition}
 Y_n \le \sqrt 2 Y_n' + Y_n'',
\end{equation}
where
\[
 Y_n' = \left( \sum_{k=0}^{\lfloor n/2\rfloor}\left( \frac{e^{-X_{\xi_{k}}/4}V_{k}}{\varepsilon}\wedge 1\right)\right)\wedge 1,\quad Y_n'' = \left( \sum_{k=\lfloor n/2\rfloor+1}^{n-1}\left(\sqrt n \frac{e^{-X_{\xi_{k}}/2}V_{k}}{\varepsilon}\wedge 1\right)\right)\wedge 1,
\]
(the $4$ in the exponent in the definition of $Y_n'$ is unimportant and serves to make notation simpler later on).
Note that both sequences of random variables $(Y_n')_{n\ge0}$ and $(Y_n'')_{n\ge0}$ are adapted to the canonical filtration $(\FF_n)_{n\ge0}$ of the branching random walk and uniformly bounded by $1$. By monotonicity, $Y_n'$ converges $\mathbb{P}_{x}^+$-almost surely as $n\to\infty$ to $Y'_{\infty}$ defined by 
\begin{equation}\label{eq:defYinfty}
Y'_{\infty} = \left(\sum_{k=0}^{\infty}\left(\frac{e^{-X_{\xi_{k}}/4}V_{k}}{\varepsilon}\wedge 1\right)\right)\wedge 1.
\end{equation}
We now claim the following:
\begin{enumerate}
 \item[a)] $\mathbb{E}_x^+[Y_\infty'] \to 0$ as $x\to\infty$.
 \item[b)] $Y_n'' \to 0$ as $n\to\infty$, in $\mathbb{P}_{x}^+$-probability.
\end{enumerate}
Let us see how these two claims imply the statement of the lemma. First, applying Lemma~\ref{lem:KV} to the r.v.'s $(Y_n')_{n\ge0}$ and $Y_\infty'$, we have
\begin{equation*}
\lim_{n\to\infty}\mathbb{E}_{x}^{*}\left[Y_n'\big{|}\min_{k\le n}X_{\xi_k}\ge 0\right]=\mathbb{E}_{x}^{+}\left[Y_{\infty}'\right].
\end{equation*}
Second, applying again Lemma~\ref{lem:KV} the r.v.'s $(Y_n'')_{n\ge0}$ and using claim b) above, we get
\[
 \lim_{n\to\infty}\mathbb{E}_{x}^{*}\left[Y_n''\big{|}\min_{k\le n}X_{\xi_k}\ge 0\right]= 0.
\]
Plugging these two equalities into \eqref{eq:YnDecomposition} and \eqref{eq:boundingT2} yields (with some $C>0$),
\[
 \limsup_{n\to\infty} T_{2}(x,\varepsilon,n)\le C \mathbb{E}_x^+[Y_\infty'].
\]
Together with Claim a) this yields the lemma.

It now remains to prove Claims a) and b) above. We start with Claim a). Recall that under $\mathbb{P}_{x}^+$, the offspring distribution is biased by
\begin{align*}
\frac{\sum_{|u|=1}R(X_u)e^{-X_u}}{R(x)e^{-x}}&\le C\frac{\sum_{|u|=1}(R(x)+(X_u-x)^+)e^{-(X_u-x)}}{R(x)}\\
&=C\left(W_1+\frac{Z_1}{R(x)}\right),
\end{align*}
where the inequality is an easy consequence of \eqref{eq:Rlimit} in Lemma~\ref{lem:Kozlov}.
Hence,  for every $k\ge 0$,
\begin{multline*}
 \qquad\mathbb{E}_{x}^+\left[\frac{e^{-X_{\xi_k}/4}V_k}{\eg}\wedge 1\big{|}\FF_{k}\right]\le C f(X_{\xi_k}),\quad\text{where}\\
 f(y) = \mathbb{E}\left[\left(W_1+\frac{Z_1}{R(y)}\right)\left(\frac{e^{-y/4}(W_1+Z_1)}{\eg}\wedge 1\right)\right].\qquad
\end{multline*}
We decompose $f$:
\begin{align*}
 f(y) &= f_{1}(y)+\frac{f_{2}(y)}{R(y)},\quad\text{with}\\
 f_1(y) &=  \mathbb{E}\left[W_1\left(\eg^{-1} e^{-y/4}(W_1+Z_1)\wedge 1\right)\right],\\
 f_2(y) &= \mathbb{E}\left[Z_1\left(\eg^{-1} e^{-y/4}(W_1+Z_1)\wedge 1\right)\right].
\end{align*}
We now use Assumptions \eqref{eq:Wlog2W} and \eqref{eq:DlogD} and  a Tauberian-type result (Lemma~\ref{lem:tauberian} in the appendix) to bound certain integrals of $f_1$ and $f_2$. First note that it can be obtained from Lemma B.1 (i) in Aïdékon~\cite{Aidekon2013} (and is implicit in the proof of part (ii) of that lemma) that Assumptions \eqref{eq:Wlog2W} and \eqref{eq:DlogD}  imply that
\begin{equation*}
\mathbb{E}\left[W_1\left(\log_+(W_1+Z_1)\right)^2\right]<\infty\quad\text{and}\quad\mathbb{E}\left[Z_1\log_+(W_1+Z_1)\right]<\infty.
\end{equation*}
We then apply Lemma~\ref{lem:tauberian} twice to the r.v. $W_1+Z_1$, once under the law $\mathbb{E}[W_1\,\cdot]$ and with $\rho(x) = x$, and once under the law $\mathbb{E}[(Z_1/\mathbb{E}[Z_1])\,\cdot]$ and with $\rho \equiv 1$. We then obtain 
\begin{equation*}
\int_{0}^{\infty}f_{1}(y)y\,dy<\infty\quad\text{and}\quad\int_{0}^{\infty}f_{2}(y)\,dy<\infty,
\end{equation*}
hence, by the bound \eqref{eq:defc1} on $R$, \begin{equation}\label{eq:fintegral}\int_{0}^{\infty}f(y)y\,dy<\infty.\end{equation}

Now we may compute:
\begin{align}
\mathbb{E}_{x}^{+}\left[Y_\infty\right]
\le \sum_{k=0}^{\infty} \mathbb{E}_{x}^{+}\left[\frac{e^{-X_{\xi_k}/4}V_k}{\eg}\wedge 1\right]
\le C \sum_{k=0}^{\infty}\mathbb{E}_{x}^{+}\left[f(X_{\xi_k})\right].\label{eq:boundingYinfty}
\end{align}
Note that $f$ is bounded and non-increasing by definition. Equations~\eqref{eq:fintegral} and \eqref{eq:boundingYinfty} together with Lemma~\ref{lem:GreenEstimate} then imply Claim a).

To prove Claim b) we use the following invariance principle by Caravenna and Chaumont \cite{Caravenna2008}: the rescaled process $(n^{-1/2}X_{\xi_{\lfloor nt\rfloor}})_{t\ge0}$ converges in distribution under $\mathbb{P}_{x}^+$ to a three-dimensional Bessel process as $n\to\infty$. As a consequence, for every $\eta\in(0,1)$ there exists $\dg>0$, such that for large $n$, with probability at least $1-\eta$, we have $X_{\xi_k}>\delta\sqrt{n}$ for every $k\in[\![\lfloor n/2\rfloor+1,n]\!]$. So there is some positive constant $c$ such that, with probability at least $1-\eta$,
\begin{equation*}
Y_n''\le c\sum_{k=\lfloor n/2\rfloor+1}^{n-1}\left(\frac{e^{-X_{\xi_{k}}/4}V_{k}}{\varepsilon}\wedge 1\right),
\end{equation*}
 which converges to $0$ in $\mathbb{P}_x^+$-probability as $n\to\infty$ since for every $x\ge0$, as shown above,
\begin{equation*}
\mathbb{E}_{x}^{+}\left[\sum_{k=0}^{\infty}\left(\frac{e^{-X_{\xi_k}/4}V_k}{\eg}\wedge1\right)\right]<\infty.
\end{equation*}
This proves Claim b) and finishes the proof of the Lemma.

\end{proof}


\appendix

\section{Random walks on the half-line}\label{appGreen}
In this section, we recall some properties of random walks on the positive half-line, including a representation of their Green operators (Theorem~\ref{th:green}). This allows to prove Lemma~\ref{lem:GreenEstimate} from the main text.

Let $(S_n)_{n\ge0}$ be a random walk of oscillating type started at $S_0 = 0$. Denote its Markov dual by $\hat S_n = -S_n$. Associated to the random walk are four ladder height processes with associated ladder times:
\begin{itemize}
\item Strictly descending: $(H_n)_{n\ge0}$, $(L_n)_{n\ge0}$
\item Weakly descending: $(H_n^=)_{n\ge0}$, $(L_n^=)_{n\ge0}$
\item Strictly ascending: $(\hat H_n)_{n\ge0}$, $(\hat L_n)_{n\ge0}$
\item Weakly ascending: $(\hat H_n^=)_{n\ge0}$, $(\hat L_n^=)_{n\ge0}$
\end{itemize}
Explicitly, $H_n = S_{L_n}$, where $L_0 = 0$ and 
\[
L_{n+1} = \min\{k>L_n: S_k < S_{L_n}\}.
\]
The other processes are defined analogously, with the ``$<$'' above replaced by, respectively, $\le$, $>$, $\ge$.

Let $\mu$, $\mu^=$, $\hat \mu$, $\hat \mu^=$ be the renewal measures of the processes $(|H_n|)_{n\ge0}$, $(|H_n^=|)_{n\ge0}$, $(\hat H_n)_{n\ge0}$, $(\hat H_n^=)_{n\ge0}$, respectively, i.e.,
\[
\mu(dx) = \sum_{n\ge0} \mathbb{P}(|H_n|\in dx),
\]
with the other measures defined analogously. We first note the following fact:
\begin{lemma}
\label{lem:c=}
There exists a constant $c^=\in(0,\infty)$, such that
\[
\mu^= = c^=\mu,\quad \hat\mu^= = c^=\hat \mu.
\]
Furthermore, $c^=$ admits the following equivalent expressions
\begin{align*}
c^=
&= \left(1-\sum_{n=1}^\infty \mathbb{P}(S_n = 0,\,S_k < 0,\,1\le k \le n)\right)^{-1}\\
&= \left(1-\sum_{n=1}^\infty  \mathbb{P}(S_n = 0,\,S_k > 0,\,1\le k \le n)\right)^{-1}\\
&= \exp\left(\sum_{n=1}^\infty \frac 1 n \mathbb{P}(S_n = 0)\right)
\end{align*}
\end{lemma}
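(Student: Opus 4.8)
The plan is to prove the proportionality $\mu^{=}=c^{=}\mu$ together with a probabilistic description of the constant, and then to read off the three formulas. Throughout I would set $\sigma=\min\{n\ge1:S_{n}<0\}$ and $\sigma^{=}=\min\{n\ge1:S_{n}\le0\}$, both a.s.\ finite since the walk oscillates, and I would define $p_{0}=\mathbb{P}(S_{\sigma^{=}}=0)=\sum_{n\ge1}\mathbb{P}(S_{n}=0,\ S_{k}>0\text{ for }1\le k\le n-1)$; oscillation also forces $\mathbb{P}(S_{1}<0)>0$, hence $p_{0}<1$.

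\emph{Step 1: $\mu^{=}=c^{=}\mu$.} Since ladder height processes have i.i.d.\ increments, it is enough to relate the step law $G^{=}$ of $(|H^{=}_{n}|)_{n\ge0}$ to the step law $G$ of $(|H_{n}|)_{n\ge0}$. Decomposing at time $\sigma^{=}$ and applying the strong Markov property on $\{S_{\sigma^{=}}=0\}$, where the walk restarts from the origin, one gets $\mathrm{Law}(S_{\sigma})=\mathrm{Law}(S_{\sigma^{=}}\mid S_{\sigma^{=}}<0)$, hence
\[
G^{=}=p_{0}\,\delta_{0}+(1-p_{0})\,G.
\]
Since $\delta_{0}$ is the identity for convolution, expanding $(G^{=})^{*n}$ by the binomial formula and summing over $n\ge0$ gives, with $c^{=}:=(1-p_{0})^{-1}$,
\[
\mu^{=}=\sum_{n\ge0}(G^{=})^{*n}=\sum_{k\ge0}(1-p_{0})^{k}\Big(\sum_{m\ge0}\binom{m+k}{k}p_{0}^{m}\Big)G^{*k}=\frac{1}{1-p_{0}}\sum_{k\ge0}G^{*k}=c^{=}\mu,
\]
the rearrangement being legitimate as all terms are non-negative measures. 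In particular $c^{=}\in[1,\infty)$, and $c^{=}=1$ exactly when $\mathbb{P}(S_{n}=0)=0$ for all $n\ge1$.

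\emph{Step 2: the dual walk and the first two expressions.} Applying Step 1 to the dual walk $\hat S=-S$ yields $\hat\mu^{=}=(1-\hat p_{0})^{-1}\hat\mu$ with $\hat p_{0}=\sum_{n\ge1}\mathbb{P}(S_{n}=0,\ S_{k}<0\text{ for }1\le k\le n-1)$. Path reversal on $n$-step loops, $(S_{k})_{0\le k\le n}\mapsto(-S_{n-k})_{0\le k\le n}$, preserves the law of the increments and exchanges $\{S_{k}>0,\,1\le k\le n-1\}$ with $\{S_{k}<0,\,1\le k\le n-1\}$; summing over $n$ gives $p_{0}=\hat p_{0}$, so both renewal measures carry the same constant $c^{=}$ and the first two displayed expressions for it coincide.

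\emph{Step 3: the Spitzer expression.} Write $P(s)=\mathbb{E}[s^{\sigma^{=}}\mathbf{1}_{S_{\sigma^{=}}=0}]$, so $P(1)=p_{0}$. Splitting $\mathbb{E}[s^{\sigma}e^{i\theta S_{\sigma}}]$ according to the sign of $S_{\sigma^{=}}$ and restarting the walk on $\{S_{\sigma^{=}}=0\}$ as in Step 1, one finds, for $|s|<1$ and $\theta\in\mathbb{R}$,
\[
1-\mathbb{E}\big[s^{\sigma^{=}}e^{i\theta S_{\sigma^{=}}}\big]=\big(1-P(s)\big)\big(1-\mathbb{E}\big[s^{\sigma}e^{i\theta S_{\sigma}}\big]\big).
\]
The Spitzer--Baxter identity (applied to $-S$; see e.g.\ Feller \cite{Feller1971}) gives $1-\mathbb{E}[s^{\sigma^{=}}e^{i\theta S_{\sigma^{=}}}]=\exp(-\sum_{n\ge1}\tfrac{s^{n}}{n}\mathbb{E}[e^{i\theta S_{n}}\mathbf{1}_{S_{n}\le0}])$ and the same with $\le$ replaced by $<$ for $\sigma$; taking the quotient, the exponents differ by $\sum_{n\ge1}\tfrac{s^{n}}{n}\mathbb{P}(S_{n}=0)$, so $1-P(s)=\exp(-\sum_{n\ge1}\tfrac{s^{n}}{n}\mathbb{P}(S_{n}=0))$. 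Letting $s\uparrow1$ (the series converges by the local limit theorem) yields $c^{=}=(1-p_{0})^{-1}=\exp(\sum_{n\ge1}\tfrac1n\mathbb{P}(S_{n}=0))$, which is the third expression; it also reconfirms $c^{=}=\hat c^{=}$ since $\mathbb{P}(S_n=0)=\mathbb{P}(\hat S_n=0)$.

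\emph{Main obstacle.} The crux is the decomposition $G^{=}=p_{0}\delta_{0}+(1-p_{0})G$ of Step 1 and its analogue inside Step 3: both require carefully accounting, via the strong Markov property at $\sigma^{=}$, for the contribution of paths that touch the origin before crossing strictly below it. Once this bookkeeping is in place, the rest reduces to the binomial rearrangement, path reversal, and the classical Spitzer--Baxter identity.
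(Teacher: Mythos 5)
Your proof is correct and complete. The paper's own proof is almost entirely by citation: the proportionality $\mu^==c^=\mu$ together with the first two expressions for $c^=$ is quoted as formula (1.13) of Feller, Section XII.1; the equality of those two expressions is obtained by exactly the time-reversal argument you give in Step 2; and the exponential formula is quoted as Lemma 2 of Feller, Section XVIII.3, evaluated at $s=1$. What you do differently is re-derive the two cited facts. Your Step 1 --- the decomposition $G^==p_0\delta_0+(1-p_0)G$ via the strong Markov property at $\sigma^=$, followed by the binomial/geometric resummation of $\sum_{n}(G^=)^{*n}$ --- is precisely the content of Feller's (1.13), and your Step 3 --- the factorization $1-\mathbb{E}[s^{\sigma^=}e^{i\theta S_{\sigma^=}}]=(1-P(s))(1-\mathbb{E}[s^{\sigma}e^{i\theta S_{\sigma}}])$ combined with the Spitzer--Baxter identity and $s\uparrow1$ --- is precisely Feller's Lemma 2. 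So the underlying mathematics coincides, but your version is self-contained modulo the Spitzer--Baxter identity, which is a worthwhile gain in an appendix that otherwise aims to avoid black-box fluctuation theory. Two minor remarks: the index range in the statement must be read as $1\le k\le n-1$, as you do (taken literally, $1\le k\le n$ makes the events empty, so this is a typo in the statement); and the convergence of $\sum_n n^{-1}\mathbb{P}(S_n=0)$ does not actually require the local limit theorem --- it already follows from your identity $1-P(s)=\exp\left(-\sum_{n\ge1}\frac{s^n}{n}\mathbb{P}(S_n=0)\right)$ together with monotone convergence and $p_0<1$, which keeps the argument free of extra assumptions on the walk.
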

\begin{proof}
The first statement (with $c^=$ equal to its first, respectively, second  expression given in the statement) is (1.13) in \cite[Section XII.1]{Feller1971}. The equivalence of the first two expressions for $c^=$ follows by considering for each $n$ the time-reversed walk $(S^*_k)_{0\le k\le n}$, where $S^*_k = S_n - S_{n-k}$, which has the same law as $(S_k)_{0\le k\le n}$ (see \cite[Section XII.2]{Feller1971}). Finally, the last expression for $c^=$ follows from setting $s=1$ in Lemma 2 in \cite[Section XVIII.3]{Feller1971}.
\end{proof}

From the ``duality lemma'' \cite[Section XII.2]{Feller1971}, we have the following equivalent representation of $\mu$:
\begin{equation}
\label{eq:mu_duality}
\mu(dx) = \sum_{n\ge0} \mathbb{P}(|S_n| \in dx,\ S_k < 0,\,1\le k\le n).
\end{equation}
The other measures have analogous representations with the ``$<$'' above replaced by, respectively, $\le$, $>$, $\ge$.

We are interested in the random walk killed when it enters the negative half-line. Define two functions $R,\bar R:\mathbb{R}_+\to\mathbb{R}_+$ by
\begin{align}
R(x) &= \mu([0,x]) \label{eq:Rdef}\\
\bar R(x) &= \begin{cases}
\mu^=([0,x)), & x>0\\
1, & x = 0
\end{cases} \label{eq:Rbardef}
\end{align}
The function $R$ is also called the \emph{renewal function} associated to the strictly descending ladder heights of the random walk $(S_n)_{n\ge0}$.
The following lemma is originally due to Tanaka~\cite{Tan1989}, a good reference is \cite[Lemma 4.2, p78]{Kersting2017}
\begin{lemma}
\label{lem:R}
The functions $R$ and $\bar R$ are harmonic for the random walk killed when it enters $(-\infty,0)$ and $(-\infty,0]$, respectively, i.e. for all $x\ge0$,
\begin{align*}
R(x) &= \mathbb{E}[R(x+S_1)\boldsymbol{1}_{x+S_1\ge 0}]\\
\bar R(x) &= \mathbb{E}[\bar R(x+S_1)\boldsymbol{1}_{x+S_1> 0}]
\end{align*}
\end{lemma}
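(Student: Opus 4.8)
The plan is to reduce both statements to a single convolution identity for $\mu$ coming from the duality representation \eqref{eq:mu_duality}. I describe the argument for $R$; the one for $\bar R$ is identical after exchanging the roles of strict and weak inequalities in the two ladder structures.

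Write $F$ for the law of the step $S_1$, let $\check\nu(\cdot)=\nu(-\cdot)$ denote the reflection of a measure $\nu$, and for a function $h$ and a measure $\nu$ set $(h\star\nu)(x)=\int h(x-y)\,\nu(dy)$. Because $R\equiv 0$ on $(-\infty,0)$, the asserted equality $R(x)=\mathbb{E}[R(x+S_1)\boldsymbol 1_{x+S_1\ge 0}]$ for $x\ge 0$ is the same as $R(x)=\mathbb{E}[R(x+S_1)]=(R\star\check F)(x)$; and by \eqref{eq:Rdef} (again using $R\equiv 0$ on $(-\infty,0)$) one has $R=\boldsymbol 1_{[0,\infty)}\star\mu$ as functions on $\mathbb{R}$. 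Using the linear bound \eqref{eq:defc1} on $R$ together with $\mathbb{E}|S_1|<\infty$, one checks that $R\star\check F$ is everywhere finite and that all the convolutions below are well-defined locally finite (signed) measures. Hence it suffices to prove that $\boldsymbol 1_{[0,\infty)}\star\big(\mu\star(\delta_0-\check F)\big)$ vanishes on $[0,\infty)$.

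The key point is the identity $\mu\star(\delta_0-\check F)=\delta_0-\check G$, where $G$ is the law of the first weakly ascending ladder height $\hat H_1^{=}=S_{\hat L_1^{=}}$ — a genuine probability measure on $[0,\infty)$, since the walk oscillates and hence visits $[0,\infty)$ at some time $\ge 1$ almost surely. To prove it, note that by \eqref{eq:mu_duality}, after using $|S_n|=-S_n$ on the relevant event, $\mu=\delta_0+\sum_{n\ge1}\mathbb{P}\big(-S_n\in\cdot,\ S_k<0,\ 1\le k\le n\big)$. Convolving the $n$-th term with $\check F$ amounts, by the Markov property, to appending one further step, yielding $\mathbb{P}\big(-S_{n+1}\in\cdot,\ S_k<0,\ 1\le k\le n\big)$. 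Forming the difference $\mu-\mu\star\check F$, the series telescopes and the discrepancy between the events $\{S_k<0,\,1\le k\le n-1\}$ and $\{S_k<0,\,1\le k\le n\}$ produces $\mathbb{P}\big(-S_n\in\cdot,\ S_k<0\ \forall\,1\le k< n,\ S_n\ge 0\big)$; summing over $n\ge1$, each path contributes through the unique index $n=\hat L_1^{=}=\inf\{m\ge1:S_m\ge 0\}$, so the total equals $\mathbb{P}(-\hat H_1^{=}\in\cdot)=\check G$. This gives $\mu-\mu\star\check F=\delta_0-\check G$.

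With the identity in hand, for $x\ge 0$, and since $\check G$ is a probability measure carried by $(-\infty,0]\subseteq(-\infty,x]$,
\begin{equation*}
R(x)-(R\star\check F)(x)=\big(\boldsymbol 1_{[0,\infty)}\star(\delta_0-\check G)\big)(x)=\boldsymbol 1_{[0,\infty)}(x)-\check G\big((-\infty,x]\big)=1-1=0,
\end{equation*}
which is exactly the harmonicity of $R$. For $\bar R$ one runs the same computation with the weakly descending ladder heights in \eqref{eq:mu_duality} (so $\mu^{=}$ replaces $\mu$) and with the first \emph{strictly} ascending ladder height $\hat H_1$ in place of $\hat H_1^{=}$, working with $\bar R=\boldsymbol 1_{(0,\infty)}\star\mu^{=}$ on $\mathbb{R}$; here the convention $\bar R(0):=1$ plays no role, because the harmonicity equation for $\bar R$ only evaluates $\bar R$ on $(0,\infty)$ through the strict indicator $\boldsymbol 1_{x+S_1>0}$. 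The computation itself is short, and the only delicate point — the main obstacle — is the bookkeeping of ladder conventions: which ladders are strict and which weak, whether the relevant ascending epoch is $\{S_m\ge 0\}$ or $\{S_m>0\}$, and how these match the half-open intervals $[0,x]$ and $[0,x)$ appearing in the definitions of $R$ and $\bar R$ — since getting any of these backwards would destroy the telescoping (a quick sanity check is simple random walk, where $\mu=\sum_{k\ge0}\delta_k$ and $G=\tfrac12\delta_0+\tfrac12\delta_1$).
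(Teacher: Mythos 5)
The paper itself offers no proof of this lemma --- it is quoted from Tanaka \cite{Tan1989} and Kersting--Vatutin \cite{Kersting2017} --- so your argument should be measured against the standard literature proofs, which obtain harmonicity by a path-by-path time reversal (decomposing the reversed walk at its minimum). You instead absorb the time reversal entirely into the duality representation \eqref{eq:mu_duality} and then prove the Wiener--Hopf-type measure identity $\mu\star(\delta_0-\check F)=\delta_0-\check G$ by telescoping, after which harmonicity of $R=\boldsymbol 1_{[0,\infty)}\star\mu$ is a one-line evaluation. The telescoping is sound: with $a_m=\mathbb{P}(-S_m\in\cdot,\,S_k<0,\,1\le k\le m)$ and $b_m=\mathbb{P}(-S_m\in\cdot,\,S_k<0,\,1\le k\le m-1)$ one has $0\le a_m\le b_m$ and $b_m-a_m=\mathbb{P}(-S_m\in\cdot,\,\hat L_1^{=}=m)$, all interchanges being justified by non-negativity, and oscillation gives $\hat L_1^{=}<\infty$ a.s.\ so that $G$ is indeed a probability measure. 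This is correct and self-contained, and it avoids the Sparre Andersen identities that the literature proofs lean on. A minor simplification: you do not need $\mathbb{E}|S_1|<\infty$ for the a priori finiteness; rearranging the identity as $\mu+\check G=\delta_0+\mu\star\check F$, an equality of non-negative measures, and evaluating on $(-\infty,x]$ yields $\mathbb{E}[R(x+S_1)\boldsymbol 1_{x+S_1\ge0}]=R(x)<\infty$ for free, so the argument covers any oscillating walk, exactly as the lemma is stated.

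One point does need patching in the $\bar R$ case. It is not accurate that ``the convention $\bar R(0)=1$ plays no role'': at $x=0$ the identity to be proven reads $\bar R(0)=\mathbb{E}[\bar R(S_1)\boldsymbol 1_{S_1>0}]$, and its \emph{left}-hand side is precisely the convention. Writing $\tilde R=\boldsymbol 1_{(0,\infty)}\star\mu^{=}$ (so $\tilde R=\bar R$ on $(0,\infty)$ but $\tilde R(0)=0\ne\bar R(0)$), your computation gives $\tilde R(x)-\mathbb{E}[\tilde R(x+S_1)]=\boldsymbol 1_{(0,\infty)}(x)-\check G'((-\infty,x))$, where $G'$ is the law of $\hat H_1$, supported on $(0,\infty)$. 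For $x>0$ this vanishes, as you say; at $x=0$ it equals $0-1=-1$, which is exactly what is required, since then $\mathbb{E}[\bar R(S_1)\boldsymbol 1_{S_1>0}]=\tilde R(0)+1=1=\bar R(0)$. So the case $x=0$ is in fact covered by the same identity, but only because the defect $-1$ matches the jump $\bar R(0)-\tilde R(0)$; as written, your dismissal of this case is a (one-line, but real) gap.
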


Finally, define two Green operators by
\begin{align*}
G f(x) &= \sum_{n=0}^\infty \mathbb{E}[f(x+S_n)\boldsymbol{1}_{x+S_k\ge 0,\,1\le k\le n}]\\
\bar G f(x) &= \sum_{n=0}^\infty \mathbb{E}[f(x+S_n)\boldsymbol{1}_{x+S_k > 0,\,1\le k\le n}].
\end{align*}
For $x=0$, Equation~\eqref{eq:mu_duality} (with $\hat\mu$ instead of $\mu$) gives
\begin{align*}
Gf(0) = \int_{[0,\infty)} \hat \mu^=(dz) f(z) = c^= \int_{[0,\infty)} \hat \mu(dz) f(z) = c^= \bar Gf(0)
\end{align*}
In general, the Green operators have the following expressions:
\begin{theorem}
\label{th:green}
For every measurable, non-negative function $f:\mathbb{R}_+\to\mathbb{R}_+$, we have
\begin{align*}
G f(x) &= c^= \int_{[0,x]\times [0,\infty)} (\mu\otimes \hat \mu)(dy,dz) f(x-y+z)\quad\forall x\ge 0\\
\bar G f(x) &= c^= \int_{[0,x)\times [0,\infty)} (\mu\otimes \hat \mu)(dy,dz) f(x-y+z)\quad \forall x>0.
\end{align*}
\end{theorem}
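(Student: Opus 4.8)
The plan is to prove both identities at once by rewriting the Green operators in terms of the running minimum of the walk and then appealing to the duality representation \eqref{eq:mu_duality}. Fix $x\ge 0$ and write $\underline{S}_n=\min_{0\le k\le n}S_k$. Since $x\ge 0$ and $S_0=0$, the events $\{x+S_k\ge 0,\ 1\le k\le n\}$ and $\{x+\underline{S}_n\ge 0\}$ coincide, and likewise their strict versions. Hence, setting $(y,z)=(-\underline{S}_n,\,S_n-\underline{S}_n)\in[0,\infty)^2$,
\[
Gf(x)=\sum_{n\ge 0}\mathbb{E}\!\left[f\big(x-(-\underline{S}_n)+(S_n-\underline{S}_n)\big)\boldsymbol{1}_{-\underline{S}_n\le x}\right],
\]
with the identical identity for $\bar Gf(x)$ (valid for $x>0$) but with the indicator replaced by $\boldsymbol{1}_{-\underline{S}_n<x}$. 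Both operators are therefore the integral of $f(x-y+z)$ against the occupation measure $\nu(dy,dz):=\sum_{n\ge 0}\mathbb{P}\big((-\underline{S}_n,\,S_n-\underline{S}_n)\in(dy,dz)\big)$ on $[0,\infty)^2$, over the region $\{y\le x\}$, respectively $\{y<x\}$. Thus it suffices to show that $\nu=c^=\,\mu\otimes\hat\mu$.

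To identify $\nu$, decompose each finite path $(S_0,\dots,S_n)$ at the \emph{first} time $\rho$ at which $\underline{S}_n$ is attained. Summing over $n$ factorizes into a sum over the pre-minimum segment $(S_0,\dots,S_\rho)$ and a sum over the increments of the post-minimum segment $(S_\rho,\dots,S_n)$, because (conditionally on the length $\rho$ and the relevant sign patterns) these are built from disjoint blocks of i.i.d.\ increments. The pre-minimum segment satisfies $S_k>S_\rho$ for $0\le k<\rho$ and ends at $S_\rho=-y$; reversing time and using \eqref{eq:mu_duality} shows that its contribution to the $y$-marginal is exactly $\mu(dy)$ (the $n=0$ term giving the atom $\mu(\{0\})=1$ at $y=0$). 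The post-minimum segment, re-centered at its starting point, is a path $(S'_l)_{0\le l\le n-\rho}$ with $S'_k\ge 0$ for $1\le k\le n-\rho$ and $S'_{n-\rho}=z$; its contribution is $\sum_{l\ge 0}\mathbb{P}(S'_l\in dz,\ S'_k\ge 0,\ 1\le k\le l)$, which is precisely $\hat\mu^=(dz)$ by the $(\ge)$-analogue of \eqref{eq:mu_duality}. Therefore $\nu=\mu\otimes\hat\mu^=$, and $\mu\otimes\hat\mu^==c^=\,\mu\otimes\hat\mu$ by Lemma~\ref{lem:c=}, which is the claim. Substituting back and restricting the domain of integration to $\{y\le x\}$ for $G$ and to $\{y<x\}$ for $\bar G$ gives the two displayed formulas.

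The only delicate step is the path decomposition: one must verify that splitting at the first minimum time produces a pre-minimum segment governed by the \emph{strictly} descending ladder structure and a post-minimum segment governed by the \emph{weakly} ascending one, so that exactly one factor $c^=$ appears when converting $\hat\mu^=$ to $\hat\mu$. (Splitting at the last minimum time instead exchanges the roles, yielding $\mu^=\otimes\hat\mu=c^=\,\mu\otimes\hat\mu$, the same answer.) As a consistency check, for $x=0$ the integration region collapses to $\{y=0\}$ and the formula reduces to $Gf(0)=c^=\int_{[0,\infty)}\hat\mu(dz)f(z)$, recovering the computation already carried out in the text just above the statement of the theorem.
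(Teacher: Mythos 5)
Your argument is correct and follows essentially the same route as the paper: both express the Green operator through the pair $(-\underline{S}_n,\,S_n-\underline{S}_n)$, factorize the occupation measure $\sum_n\mathbb{P}((-\underline{S}_n,S_n-\underline{S}_n)\in\cdot)$ by splitting the path at the first time the minimum is attained, and identify the two factors as $\mu$ and $\hat\mu^{=}=c^{=}\hat\mu$ via the duality representation and Lemma~\ref{lem:c=}. The only difference is that you carry out this path decomposition explicitly, whereas the paper invokes the equivalent Laplace-transform factorization identity from the literature (noting that it follows from exactly this decomposition), so your write-up is a slightly more self-contained version of the same proof.
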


We were not able to find this result in the literature in this generality. For random walks on the integers, it was proven by Spitzer~\cite[P19.3, p209]{Spitzer1976}. The general result can, with some effort\footnote{The effort consists in identifying the mesures $\mu$ and $\hat \mu$ appearing there with the renewal measures associated to the compound Poisson process: this can be done for example by inspecting and adapting the treatment in \cite[Chapter VI]{Bertoin1996}.}, be deduced from a corresponding result for L\'evy processes~\cite{Tanaka2004}, using the fact that the Green operators and the renewal measures defined above are equal to the corresponding ones for the compound Poisson process associated to the random walk. However, all of these proofs make use of Sparre~Andersen's identities (see e.g. \cite[Section 4.2]{Kersting2017}) or their continuous analogue. Instead, we give here a direct proof, based on Spitzer's original idea\footnote{In fact, close inspection of Spitzer's proof shows that his use of Sparre~Andersen's identities can be easily avoided and is purely due to his choice of notation. This results in the proof we give here.}.

\begin{proof}[Proof of Theorem~\ref{th:green}]
~\\
Define
\[
M_n = \min_{0\le k\le n} S_k,\quad n=0,1,\ldots
\]
The following identity is well-known and can be easily obtained by decomposing the random walk at the first time it hits $M_n$ (see e.g. \cite[(2.2)]{Pecherskii1969} or the proof of \cite[Theorem 4.4]{Kersting2017}): for every $\lambda,\mu > 0$, we have
\begin{align*}
\sum_{n=0}^\infty \mathbb{E}[e^{\lambda M_n - \mu(S_n - M_n)}] = 
\left(\sum_{n=0}^\infty \mathbb{E}[e^{\lambda S_n}\boldsymbol{1}_{S_k<0,\,1\le k\le n}] \right)
\left(\sum_{n=0}^\infty \mathbb{E}[e^{-\mu S_n}\boldsymbol{1}_{S_k\ge 0,\,1\le k\le n}] \right).
\end{align*}
Equivalently, by \eqref{eq:mu_duality} and Lemma~\ref{lem:c=}, for every measurable, non-negative function $g$, we have
\begin{align*}
\sum_{n=0}^\infty \mathbb{E}[g(M_n,S_n-M_n)] = c^= \int_{[0,\infty)^2} (\mu\otimes \hat \mu)(dy,dz) g(-y,z)
\end{align*}
The expressions for $Gf(x)$ and $\bar Gf(x)$ then follow by setting in the above equation $g(y,z) = f(x+y+z)\boldsymbol{1}_{y \ge -x}$ and $g(y,z) = f(x+y+z)\boldsymbol{1}_{y > -x}$, respectively.
\end{proof}


\section{Laplace transform criterion for convergence in probability}
\label{appLaplace}
\begin{lemma}\label{lem:AnnexB}
Consider a probability space $(\Omega,\AA,\mathbb{P})$. Let $(\GG_n)_{n\ge0}$ be a filtration. Let $Y, Y_{0}, Y_1,...$ be non-negative r.v.s defined on that probability space. Suppose $Y$ is mesurable with respect to $\GG_{\infty}=\bigvee_{k\ge0}\GG_{k}$. Suppose that for every $\lambda\in\mathbb{Q}^+=\mathbb{Q}\cap(0,\infty)$,
\begin{equation}\label{eq:laplacecondconv}
\mathbb{E}\left[e^{-\lambda Y_n}\Big{|}\GG_{n}\right] \underset{n\to\infty}{\longrightarrow}e^{-\lambda Y}\quad\text{almost surely}.
\end{equation}
Then $Y_n$ converges in probability to $Y$ as $n\to\infty$.
\end{lemma}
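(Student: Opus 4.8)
The plan is to show that $Y_n \to Y$ in probability by combining the convergence of conditional Laplace transforms with a uniform integrability / boundedness argument that lets us upgrade from the conditional statement to an unconditional one. The central idea is that, since $Y$ is $\GG_\infty$-measurable and the $\GG_n$ form a filtration, conditional expectations given $\GG_n$ converge (by the martingale convergence theorem for the Doob martingale of a bounded random variable), so we can compare $\mathbb{E}[e^{-\lambda Y_n}\mid\GG_n]$ with $\mathbb{E}[e^{-\lambda Y}\mid\GG_n]$ and conclude that $e^{-\lambda Y_n}$ and $e^{-\lambda Y}$ become close in the appropriate sense.

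Concretely, first I would fix $\lambda\in\mathbb{Q}^+$ and consider the nonnegative random variables $U_n = e^{-\lambda Y_n}$ and $U = e^{-\lambda Y}$, both bounded by $1$. By hypothesis \eqref{eq:laplacecondconv}, $\mathbb{E}[U_n\mid\GG_n]\to U$ a.s., hence also in $L^1$ by dominated convergence. On the other hand, since $U$ is bounded and $\GG_\infty$-measurable, Doob's martingale convergence theorem gives $\mathbb{E}[U\mid\GG_n]\to U$ a.s.\ and in $L^1$. Subtracting, $\mathbb{E}[U_n - U\mid\GG_n]\to 0$ in $L^1$. Now the key step: I want to deduce $\mathbb{E}[(U_n-U)^2]\to 0$, or at least $\mathbb{E}[|U_n-U|]\to 0$. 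Writing $\mathbb{E}[(U_n-U)^2] = \mathbb{E}[U_n^2] - 2\mathbb{E}[U_nU] + \mathbb{E}[U^2]$, I would try to control $\mathbb{E}[U_n U]$: since $U$ is not $\GG_n$-measurable one cannot directly pull it out, so instead I would use the second moment of the Laplace transform, i.e.\ test with $2\lambda$ as well. The slick route: consider $\mathbb{E}[U_n e^{-\lambda Y}]$. This does not obviously simplify, so a cleaner approach is to prove convergence of one-dimensional marginals of $(Y_n, \mathbb{E}[\,\cdot\mid\GG_n])$-type quantities; but the cleanest is: from $\mathbb{E}[e^{-\lambda Y_n}\mid\GG_n]\to e^{-\lambda Y}$ a.s.\ for all rational $\lambda>0$, one deduces (via a countable exceptional set and the fact that Laplace transforms determine distributions) that the conditional law of $Y_n$ given $\GG_n$ converges weakly a.s.\ to $\delta_Y$, which forces $Y_n\to Y$ in $\GG_n$-conditional probability, and then unconditional convergence in probability follows by taking expectations of $\mathbb{P}(|Y_n - Y|>\eta\mid\GG_n)\to 0$ with dominated convergence.

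So the steps in order are: (i) reduce to rational $\lambda$ and fix a full-measure event on which \eqref{eq:laplacecondconv} holds simultaneously for all $\lambda\in\mathbb{Q}^+$; (ii) on this event, observe that $\lambda\mapsto\mathbb{E}[e^{-\lambda Y_n}\mid\GG_n]$, extended by monotonicity/continuity to all real $\lambda>0$, converges pointwise to $\lambda\mapsto e^{-\lambda Y}$, which by the continuity theorem for Laplace transforms means the random sub-probability measures $\nu_n^\omega(\cdot) = \mathbb{P}(Y_n\in\cdot\mid\GG_n)(\omega)$ converge weakly to $\delta_{Y(\omega)}$; (iii) deduce that for every $\eta>0$, $\mathbb{P}(|Y_n-Y|>\eta\mid\GG_n)\to 0$ a.s.; (iv) since these conditional probabilities are bounded by $1$, apply dominated convergence to get $\mathbb{P}(|Y_n-Y|>\eta) = \mathbb{E}[\mathbb{P}(|Y_n-Y|>\eta\mid\GG_n)]\to 0$.

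The main obstacle is step (ii)–(iii): one must be careful that $Y$ being $\GG_\infty$- but not $\GG_n$-measurable still allows the conditional Laplace transform characterization to go through. The point is that for fixed $\omega$ in the good event, the limiting function $\lambda\mapsto e^{-\lambda Y(\omega)}$ is the genuine Laplace transform of $\delta_{Y(\omega)}$, and the convergence theorem applies $\omega$ by $\omega$; the measurability of $Y$ with respect to $\GG_\infty$ is only needed to ensure $Y(\omega)$ is well-defined and that the exceptional null set is measurable. A secondary technical point is that $\mathbb{P}(|Y_n-Y|>\eta\mid\GG_n)$ is a well-defined $[0,1]$-valued random variable and that weak convergence of the conditional laws to a Dirac mass indeed yields convergence of these tail probabilities --- this uses that $\{z : |z - Y(\omega)| > \eta\}$ has boundary of $\delta_{Y(\omega)}$-measure zero. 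Once these are in place, the dominated convergence step is routine.
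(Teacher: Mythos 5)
Your overall strategy --- pass to the conditional laws of $Y_n$ given $\GG_n$, show they converge weakly a.s.\ to $\delta_Y$ via the Laplace continuity theorem on a single full-measure event, then integrate out with dominated convergence --- is the same as the paper's. Your steps (i), (ii) and (iv) are sound; the paper implements (ii) by viewing the conditional laws as random measures on the compact space $[0,\infty]$, using tightness and uniqueness of subsequential limits, which is the same continuity argument in slightly different clothing.

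The genuine gap is in step (iii). You pass from the weak convergence $\nu_n^\omega\to\delta_{Y(\omega)}$ directly to $\mathbb{P}(|Y_n-Y|>\eta\mid\GG_n)\to 0$ by evaluating $\nu_n^\omega$ on the set $\{z:|z-Y(\omega)|>\eta\}$. The identification $\mathbb{P}(|Y_n-Y|>\eta\mid\GG_n)(\omega)=\nu_n^\omega(\{z:|z-Y(\omega)|>\eta\})$ is only legitimate when $Y$ is $\GG_n$-measurable, so that it can be frozen at the value $Y(\omega)$ under the conditioning; here $Y$ is only $\GG_\infty$-measurable. The event $\{|Y_n-Y|>\eta\}$ depends on the \emph{joint} conditional law of the pair $(Y_n,Y)$ given $\GG_n$, and knowing only that the conditional marginal of $Y_n$ concentrates at $Y(\omega)$ does not by itself exclude that, conditionally on $\GG_n$, $Y_n$ and $Y$ fluctuate together so that $|Y_n-Y|$ stays large. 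The fix is the idea you mention in your opening paragraph but then drop from the step list: since $e^{-\lambda Y}$ is bounded and $\GG_\infty$-measurable, $\mathbb{E}[e^{-\lambda Y}\mid\GG_n]\to e^{-\lambda Y}$ a.s.\ by bounded martingale convergence, so the conditional law of $Y$ given $\GG_n$ \emph{also} converges weakly to $\delta_Y$; since both marginals of the conditional joint law $\bar\mu_n$ of $(Y_n,Y)$ given $\GG_n$ converge to point masses, $\bar\mu_n\to\delta_{(Y,Y)}$ a.s.\ (Slutsky), whence $\mathbb{E}\left[|Y_n-Y|\wedge 1\mid\GG_n\right]=\int (|x-y|\wedge 1)\,\bar\mu_n(dx,dy)\to 0$ a.s., and your step (iv) then applies verbatim. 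This is exactly the content of the paper's second and third steps.
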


\begin{proof}[Proof of Lemma~\ref{lem:AnnexB}]
~\\
The proof proceeds in three steps.

\emph{First step:} Fix a representative $\mu_n$ of the law of $Y_n$ conditioned on $\GG_n$. We may see $\mu_n$ as a random measure on the compact space $[0,\infty]$. Hence, the sequence $(\mu_n(\omega))_{n\ge0}$ is tight for every $\omega\in\Omega$. We wish to show that $\mu_n$ weakly converges to $\dg_Y$ almost surely as $n\to\infty$.

By \eqref{eq:laplacecondconv}, there exists an event $E\subset \Omega$ of probability 1, such that for every $\omega\in E$, for every $\lambda\in\mathbb{Q}$, $\lambda>0$, with $e^{-\infty} \coloneqq 0$,
\[
\lim_{n\to\infty} \int_{[0,\infty]} e^{-\lambda x}\,\mu_n(\omega)(dx) = e^{-\lambda Y}.
\]
In particular, for every $\omega\in E$, every subsequential limit $\mu$ of $\mu_n(\omega)$ satisfies for every $\lambda\in\mathbb{Q}$, $\lambda>0$,
\[
\int_{[0,\infty]} e^{-\lambda x}\,\mu(dx) = e^{-\lambda Y},
\]
and by continuity, this can be extended to every $\lambda > 0$. Since the Laplace transform characterizes a probability measure on $[0,\infty]$, this shows that $\mu = \dg_Y$. We have thus shown:
\begin{equation*}
\mu_{n}\underset{n\to\infty}{\longrightarrow}\dg_{Y}\quad\text{a.s.}
\end{equation*}

\emph{Second step:} We wish to show that the law of $Y$ conditioned on $\GG_n$ weakly converges to $\dg_Y$ as well, almost surely as $n\to\infty$. Note that, for every $\lg\in\mathbb{Q}^+$, $\left(\mathbb{E}[e^{-\lg Y}|\GG_n]\right)_n$ is a bounded $\GG_n$-martingale, so it converges almost surely towards $\mathbb{E}[e^{-\lg Y}|\GG_\infty]=e^{-\lg Y}$. We can then use the same argument as above to obtain that the law of $Y$ conditioned on $\GG_n$ converges a.s. towards $\dg_{Y}$.

\emph{Third step:} Denote by $\bar{\mu}_n$ the law of the pair $(Y_n,Y)$ conditioned on $\GG_n$. By the steps 1 and 2 above and Slutsky's lemma, $\bar{\mu}_n$ converges a.s. towards $\dg_{(Y,Y)}$. Hence, putting $g(x,y)=|x-y|\wedge 1$, which is a bounded and continuous function, we get
\begin{equation*}
\mathbb{E}\left[|Y_n-Y|\wedge 1\big{|}\GG_n\right]=\int g(x,y)d\bar{\mu}_{n}(x,y)\underset{n\to\infty}{\longrightarrow}0\quad\text{a.s.}
\end{equation*}
Using the dominated convergence theorem, we conclude that $\mathbb{E}[|Y_n-Y|\wedge1]$ converges to 0, i.e. that $Y_n$ converges in probability to $Y$. This concludes the proof.
\end{proof}

\section{A Tauberian-type lemma}
\label{appTauberian}

\begin{lemma}\label{lem:tauberian}
Let $Y$ be a positive random variable, $\rho$ a regularly varying function at $\infty$ of index strictly greater than $-1$ and define $\forall y\ge 0,\varphi(y)=\mathbb{E}[e^{-y}Y\wedge 1]$. Then the following statements are equivalent:
\begin{align*}
\int_{0}^{\infty}\varphi(y)\rho(y)\,dy&<\infty\\
\mathbb{E}\left[(\log_+Y)\rho(\log_+Y)\right]&<\infty
\end{align*}
\end{lemma}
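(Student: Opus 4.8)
The plan is to prove the equivalence by a direct computation exploiting the structure $\varphi(y) = \mathbb{E}[e^{-y}Y\wedge 1]$, splitting the expectation according to whether $e^{-y}Y$ exceeds $1$ or not, i.e.\ according to whether $\log_+ Y \ge y$ or $< y$. Concretely, write
\[
\varphi(y) = \mathbb{P}(\log Y \ge y) + \mathbb{E}\left[e^{-y}Y\,\boldsymbol 1_{\log_+ Y < y}\right],
\]
and then integrate against $\rho$. The first term contributes $\int_0^\infty \mathbb{P}(\log Y\ge y)\rho(y)\,dy$, which by Fubini equals $\mathbb{E}\left[\int_0^{\log_+ Y}\rho(y)\,dy\right]$; since $\rho$ is regularly varying of index $> -1$, Karamata's theorem gives $\int_0^t \rho(y)\,dy \sim t\rho(t)/(\text{index}+1)$ as $t\to\infty$, so this term is finite if and only if $\mathbb{E}[(\log_+ Y)\rho(\log_+ Y)] < \infty$. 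So the core of the argument is to show the second term, $\int_0^\infty \mathbb{E}[e^{-y}Y\,\boldsymbol 1_{\log_+ Y < y}]\,\rho(y)\,dy$, is \emph{always} finite (or at least controlled by the quantity $\mathbb{E}[(\log_+ Y)\rho(\log_+ Y)]$), so that it never obstructs the equivalence.

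For the second term I would again apply Fubini to get $\mathbb{E}\left[Y\int_{\log_+ Y}^\infty e^{-y}\rho(y)\,dy\right]$. The inner integral $\int_{t}^\infty e^{-y}\rho(y)\,dy$ decays essentially like $e^{-t}\rho(t)$ as $t\to\infty$ (the exponential dominates the polynomially-varying $\rho$; one can make this rigorous using Potter's bounds for regularly varying functions, which give $\rho(y) \le C\rho(t)(y/t)^{\text{index}+1}$ for $y\ge t$ large, making the tail integral $\le C' e^{-t}\rho(t)$ for $t$ large, with an absolute constant handling small $t$). Hence the second term is bounded by a constant times $\mathbb{E}\left[Y e^{-\log_+ Y}\rho(\log_+ Y)\right] = \mathbb{E}\left[(Y\wedge 1)\,\tfrac{Y}{Y\vee 1}\cdot(\ldots)\right]$; more simply, on $\{Y\le 1\}$ this is at most $\rho$ evaluated near $0$ times $Y$, bounded, and on $\{Y > 1\}$ it equals $\mathbb{E}[\rho(\log Y)\boldsymbol 1_{Y>1}]$, which is finite whenever $\mathbb{E}[(\log_+ Y)\rho(\log_+ Y)]<\infty$ (and in fact is dominated by it since $\rho(\log Y) \le \log Y \cdot \rho(\log Y)$ for $\log Y \ge 1$, with the region $\log Y \in [0,1]$ again contributing a bounded amount). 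This shows the second term is finite as soon as the right-hand side of the asserted equivalence holds, and the reverse direction follows because $\varphi(y) \ge \mathbb{P}(\log Y \ge y)$ pointwise, so finiteness of $\int \varphi\rho$ forces finiteness of $\int \mathbb{P}(\log Y\ge y)\rho(y)\,dy$, which is equivalent to the moment condition by the Karamata computation above.

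Assembling: finiteness of $\int_0^\infty \varphi(y)\rho(y)\,dy$ is equivalent, up to the always-finite or always-dominated second term, to finiteness of $\int_0^\infty \mathbb{P}(\log Y\ge y)\rho(y)\,dy$, which by Fubini and Karamata's theorem is equivalent to $\mathbb{E}[(\log_+ Y)\rho(\log_+ Y)]<\infty$. One subtlety to handle with care is the behaviour of $\rho$ near $0$: regular variation is only an assumption at $\infty$, so $\rho$ could be badly behaved or non-integrable near $0$; but since $\varphi$ is bounded by $1$ and $\mathbb{P}(\log Y\ge y) \to \mathbb{P}(Y=\infty)=0$ is also bounded, and $\int_0^1 \rho(y)\,dy$ is finite for any locally integrable $\rho$ (one should perhaps add local integrability as an implicit hypothesis, or note $\rho$ regularly varying at $\infty$ is typically taken locally bounded on $[0,\infty)$), the contributions near $0$ are harmless for all three integrals simultaneously.

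The main obstacle I anticipate is making the tail estimate $\int_t^\infty e^{-y}\rho(y)\,dy \asymp e^{-t}\rho(t)$ fully rigorous uniformly enough to conclude — this requires Potter's bounds or a comparable uniform convergence theorem for regularly varying functions, and care that the implied constants do not depend on $Y$. Everything else is bookkeeping with Fubini and Karamata's theorem. An alternative, possibly cleaner route avoiding sharp tail asymptotics: use the crude two-sided bound $c_1(\mathbb{P}(\log Y \ge y)) \le \varphi(y) \le \mathbb{P}(\log Y\ge y) + e^{-y}\mathbb{E}[Y\boldsymbol 1_{1 \le Y < e^y}]$ and integrate, observing that the extra term, integrated against $\rho$, telescopes via Fubini into $\mathbb{E}[Y\int_{\log_+Y}^\infty e^{-y}\rho(y)\,dy]$ and is then handled exactly as above; this isolates the single analytic fact needed and keeps the probabilistic part entirely elementary.
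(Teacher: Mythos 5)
Your proof is correct, but it takes a genuinely different route from the paper's. The paper sandwiches $\varphi(y)$ between constant multiples of $f_0(e^{-y})$, where $f_0(s)=1-\mathbb{E}[e^{-sY}]$, using $(1-e^{-1})(x\wedge 1)\le 1-e^{-x}\le x\wedge 1$; after the substitution $s=e^{-y}$ it invokes the Tauberian theorem of Bingham--Goldie--Teugels (Theorem~8.1.8 there) to convert finiteness of $\int_0^1 f_0(s)\,l(1/s)\,ds/s$ into finiteness of a truncated moment $\mathbb{E}[\int_1^{Y\vee 1} l(t)\,dt/t]$, and only then applies Karamata to identify that moment with $\mathbb{E}[(\log_+Y)\rho(\log_+Y)]$. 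You instead split $e^{-y}Y\wedge 1$ at the threshold $\log_+ Y=y$ and handle everything by hand: Tonelli plus Karamata turns the $\mathbb{P}(\log Y\ge y)$ part into exactly the desired moment (this alone gives the implication from finiteness of $\int\varphi\rho$ to the moment condition, since $\varphi(y)\ge\mathbb{P}(\log Y\ge y)$ and $\rho\ge0$), while Potter's bounds show the remainder $\mathbb{E}[Y\int_{\log_+Y}^\infty e^{-y}\rho(y)\,dy]\lesssim \mathbb{E}[\rho(\log Y)\boldsymbol 1_{Y>e^{t_0}}]+C$ is always dominated by the moment condition, giving the converse. Your argument is self-contained modulo two standard facts about regular variation (Potter and Karamata) and makes transparent why the truncation never obstructs the equivalence, at the cost of the tail estimate $\int_t^\infty e^{-y}\rho(y)\,dy\le Ce^{-t}\rho(t)$ which you correctly identify as the one analytic point requiring care; the paper's version is shorter on the page but outsources the analytic core to a heavier citation. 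Both proofs share the same implicit hypothesis of local integrability/boundedness of $\rho$ on compacts, which you rightly flag and which is harmless in the paper's applications ($\rho(x)=x$ and $\rho\equiv1$).
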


\begin{proof}
~\\
For all $s>0$, put $l(s)=\rho(\log(s))$, which is slowly varying by Proposition 1.5.7 (ii) of \cite{Bingham1987}.
Following the notations from \cite{Bingham1987}, we put $\forall s\ge0,f_0(s)=1-\mathbb{E}[\exp(-sY)]$. Using the inequalities 
\[
 \forall x\ge0,\quad(1-e^{-1}) (x\wedge 1) \le 1-e^{-x} \le x\wedge 1,
\]
we get
\begin{equation*}
\forall y\ge0,\quad (1-e^{-1})\varphi(y)\le f_0(e^{-y})\le\varphi(y).
\end{equation*}
Using this, we see that
\begin{equation*}
\int_{0}^{\infty}\varphi(y)\rho(y)\,dy<\infty\Leftrightarrow\int_{0}^{\infty}f_{0}(e^{-y})l(e^{y})\,dy<\infty.
\end{equation*}
Changing variables, we have
\[
 \int_{0}^{\infty}f_{0}(e^{-y})l(e^{y})\,dy = \int_0^1 f_0(s)l\left(\frac 1 s\right)\frac{ds}{s}
\]

Using a Tauberian theorem from Bingham and Doney (see Theorem~8.1.8 in \cite{Bingham1987} with parameters $n=0$, $\beta=0$), we have
\begin{equation*}
\int_{0}^{\infty}f_{0}(e^{-y})l(e^{y})dy<\infty\Leftrightarrow\mathbb{E}\left[\int_{1}^{Y\vee 1}l(t)\frac{dt}{t}\right]<\infty.
\end{equation*}
Changing again variables, we have for every $y\ge0$,
\[
 \int_{1}^{y\vee 1}l(t)\frac{dt}{t} = \int_0^{\log_+ y} \rho(z)\,dz.
\]
Since $\rho$ is regularly varying with some index $\nu > -1$, we have the following asymptotic (see for example Theorem~1.5.11 from \cite{Bingham1987})
\[
 \int_0^{\log_+ y} \rho(z)\,dz \sim \frac 1 {\nu+1} (\log_+ y)\rho(\log_+ y),\quad y\to\infty.
\]
The two previous displays give,
\begin{equation*}
\mathbb{E}\left[\int_{1}^{Y\vee 1}l(t)\frac{dt}{t}\right]<\infty\Leftrightarrow\mathbb{E}\left[(\log_+Y)\rho(\log_+Y)\right] < \infty.
\end{equation*}
Collecting the above identities yields the statement of the lemma.
\end{proof}



\providecommand{\bysame}{\leavevmode\hbox to3em{\hrulefill}\thinspace}
\providecommand{\MR}{\relax\ifhmode\unskip\space\fi MR }
\providecommand{\MRhref}[2]{%
  \href{http://www.ams.org/mathscinet-getitem?mr=#1}{#2}
}
\providecommand{\href}[2]{#2}


\ACKNO{We warmly thank Jean Bertoin, Ron Doney and Vladimir Vatutin for helpful discussions on random walks. Part of the work was done at the Centre de Recherches Math\'ematiques (CRM) Montr\'eal, which we thank for its hospitality. We also thank the anonymous referees whose detailed comments led to an improvement of the presentation.}


\end{document}